\documentclass[12pt]{article}
\usepackage{amsmath,amsthm,amsfonts,amssymb}
\usepackage{mathtools}
\usepackage{fullpage}
\usepackage{multirow}
\usepackage{array}
\usepackage{bbm}
\usepackage[noblocks]{authblk}
\usepackage{verbatim}
\usepackage{pgf,tikz,pgfplots}
\usepackage{mathrsfs}
\usetikzlibrary{arrows}
\usepackage{float}
\usepackage{enumerate}
\usepackage{diagbox}
\usepackage{color, colortbl}
\usepackage{hhline}
\usepackage{hyperref}
\usepackage[noadjust]{cite}

\newtheorem{theorem}{Theorem}[section]
\newtheorem{lemma}[theorem]{Lemma}
\newtheorem{proposition}[theorem]{Proposition}

\theoremstyle{definition}

\newlength{\Oldarrayrulewidth}

\newcommand{\N}{\mathbb{N}}

\DeclareMathOperator{\lcm}{lcm}

\begin{document}

\title{On Zeckendorf-Niven Numbers and Arithmetic Progressions}
\author[1]{Kelly~Lao\thanks{kellylao23@gmail.com}}
\author[2]{Steven~J.~Miller\thanks{sjm1@williams.edu}}
\author[3]{Nicholas~Rosa\thanks{nicholasrosa123@gmail.com}}
\author[4]{Mark~Shiliaev\thanks{mshiliaev@tamu.edu}}
\author[4]{Garrett~Tresch\thanks{treschgd@tamu.edu}}
\author[5]{Tony~W.~H.~Wong\thanks{wong@kutztown.edu}}
\author[6]{Han~Zhang\thanks{hzhang23@ole.augie.edu}}
\affil[1]{Department of Mathematics, Dartmouth College}
\affil[2]{Department of Mathematics, Williams College}
\affil[3]{Department of Mathematics, California State University, East Bay}
\affil[4]{Department of Mathematics, Texas A\&M University}
\affil[5]{Department of Mathematics, Kutztown University of Pennsylvania}
\affil[6]{Department of Mathematics, Augustana University}
\date{\today}

\maketitle

\begin{abstract}
A positive integer is Zeckendorf-Niven (respectively, Lucas-Niven) if it is divisible by the number of summands in its Zeckendorf decomposition (respectively, Lucas decomposition). We show that there exist infinitely many Zeckendorf-Niven numbers and Lucas-Niven numbers in every arithmetic progression. Furthermore, we provide bounds on the maximum number of consecutive Zeckendorf-Niven terms in certain arithmetic progressions.\\\\
\textit{MSC:} 11B39, 11B25.\\
\textit{Keywords:} Zeckendorf decomposition, Niven numbers, Zeckendorf-Niven, arithmetic progression.
\end{abstract}

\section{Introduction}

A \emph{Niven number} is a positive integer that is divisible by the sum of its digits in base $10$ representation. This definition was first introduced by Niven during a lecture in 1977. Since then, Niven numbers have been studied in the literature in various context \cite{ck1988,ck1993,costello,ddk,kennedy,kgb,mcdaniel}. Moreover, the definition of Niven numbers has also been generalized and modified in many different ways. For example, Grundman \cite{g1994} defined $b$-Niven numbers as positive integers that are divisible by their digit sums in base $b$, while Gohn et al. \cite{ghlssw} studied $b$-prodigious numbers, i.e., positive integers that are divisible by the product of their nonzero digits in base $b$.

One of the variations that we study are the Zeckendorf-Niven numbers, introduced by Ray and Cooper in 2006 \cite{rc}. To understand the definition, we begin with the following theorem by Zeckendorf \cite{zeckendorf}. Throughout this paper, we use $\N$ to denote the set of positive integers, and for all nonnegative integers $i$, $F_i$ denotes the $i$-th Fibonacci number, i.e., $F_0=0$, $F_1=1$, and $F_i=F_{i-1}+F_{i-2}$ for all integers $i\geq2$.

\begin{theorem}
For every $n\in\N$, there exists a unique sequence $(\zeta_i(n))_{i=2}^\infty$ such that $\zeta_i(n)\in\{0,1\}$ and $\zeta_i(n)+\zeta_{i+1}(n)\leq1$ for all $i\geq2$, and
\begin{equation}\label{eq:zeckendorf}
n=\sum_{i=2}^\infty\zeta_i(n)F_i.
\end{equation}
\end{theorem}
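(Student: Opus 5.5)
We prove existence and uniqueness separately. Both rest on one elementary estimate, which we establish first by induction on $k$. For every $k\geq2$, any sum $\sum_{i\in S}F_i$ over a set $S\subseteq\{2,\dots,k\}$ containing no two consecutive indices satisfies
\[
\sum_{i\in S}F_i\leq F_{k+1}-1,
\]
with equality when $S=\{k,k-2,k-4,\dots\}$. The cases $k=2,3$ are checked directly. For the inductive step, if $k\in S$ then $k-1\notin S$, so the hypothesis applied to $S\setminus\{k\}\subseteq\{2,\dots,k-2\}$ gives
\[
\sum_{i\in S}F_i\leq F_k+F_{k-1}-1=F_{k+1}-1.
\]
If $k\notin S$, the hypothesis for $k-1$ gives the bound $F_k-1\leq F_{k+1}-1$.

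For existence we use strong induction on $n$. Given $n\in\N$, choose the largest $k\geq2$ with $F_k\leq n$; such $k$ exists since $F_2=1$, and the Fibonacci numbers are unbounded. Then $F_k\leq n<F_{k+1}$, so $m=n-F_k$ satisfies
\[
0\leq m<F_{k+1}-F_k=F_{k-1}.
\]
If $m=0$, take $\zeta_k(n)=1$ and all other terms $0$. Otherwise $m<n$, so induction gives a valid decomposition of $m$. Every index $j$ used for $m$ satisfies $F_j\leq m<F_{k-1}$, hence $j\leq k-2$. Adding the summand $F_k$ therefore keeps the non-consecutiveness condition, and defining $\zeta_k(n)=1$ together with the digits of $m$ gives a valid sequence for $n$. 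Only finitely many $\zeta_i(n)$ are nonzero, so the series in \eqref{eq:zeckendorf} is a finite sum.

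For uniqueness, suppose two distinct admissible index sets $S\neq T$ give the same sum. Remove their common elements. This leaves disjoint sets $S',T'$, still admissible, with equal sums, not both empty, and hence both nonempty since the sums are positive. Let $k=\max(S'\cup T')$ and say $k\in S'$. Then $T'\subseteq\{2,\dots,k-1\}$, so by the estimate
\[
\sum_{i\in T'}F_i\leq F_k-1<F_k\leq\sum_{i\in S'}F_i,
\]
a contradiction.

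The main obstacle is keeping the index bookkeeping correct. One point is that indices start at $2$, so $F_1=F_2=1$ must not both be allowed; this is why the estimate is stated on $\{2,\dots,k\}$. The other is the strict inequality $m<F_{k-1}$, which forces the next index to be at most $k-2$. Everything else is routine.
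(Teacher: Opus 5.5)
Your proof is correct and complete. It is the standard proof of Zeckendorf's theorem:
\begin{itemize}
\item the bound $\sum_{i\in S}F_i\le F_{k+1}-1$ for non-consecutive $S\subseteq\{2,\dots,k\}$;
\item for existence, the greedy choice of the largest $F_k\le n$, with the strict inequality $n-F_k<F_{k-1}$;
\item for uniqueness, cancelling common indices and comparing the largest remaining index.
\end{itemize}
The paper gives no proof of this statement; it simply cites Zeckendorf, so there is no other route to compare against.

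Two details are worth stating explicitly. First, any admissible sequence has finite support, since each nonzero term contributes at least $F_i\ge1$ to a convergent sum; this is what guarantees that $\max(S'\cup T')$ exists. Second, when $k=2$ in the uniqueness step the set $\{2,\dots,k-1\}$ is empty, so your estimate (stated only for index bound $\ge2$) does not literally apply. In that case $T'=\emptyset$ contradicts nonemptiness directly.
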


Note that the sum in equation~\eqref{eq:zeckendorf} starts at $i=2$, or else we clearly lose uniqueness (either from adding $F_0$ as a summand or from switching between $F_1$ and $F_2$.) This sum is called the \emph{Zeckendorf decomposition} of $n$. Although the Zeckendorf decomposition is presented as an infinite sum in equation~\eqref{eq:zeckendorf}, there are only finitely many nonzero summands since all Fibonacci numbers are positive. For every $n\in\N$, the number of nonzero summands in the Zeckendorf decomposition of $n$ is defined as $s_Z(n)$, i.e.,
\[s_Z(n)\coloneq\sum_{i=2}^\infty\zeta_i(n).\]
We say that $n$ is \emph{Zeckendorf-Niven} if $s_Z(n)$ divides $n$.

Similarly, we define Lucas-Niven numbers as follows. For all nonnegative integers $i$, let $L_i$ denote the $i$-th Lucas number, i.e., $L_0=2$, $L_1=1$, and $L_i=L_{i-1}+L_{i-2}$ for all integers $i\geq2$. Brown \cite{brown} proved that every positive integer has a unique \emph{Lucas decomposition}.

\begin{theorem}
For every $n\in\N$, there exists a unique sequence $(\lambda_i(n))_{i=0}^\infty$ such that $\lambda_i(n)\in\{0,1\}$ and $\lambda_i(n)+\lambda_{i+1}(n)\leq1$ for all $i\geq0$, $\lambda_0(n)+\lambda_2(n)\leq1$, and
\[n=\sum_{i=0}^\infty\lambda_i(n)L_i.\]
\end{theorem}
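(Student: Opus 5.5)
Existence will come from a greedy construction and uniqueness from a tail-sum bound, with strong induction on $n$ throughout. The one preliminary fact is the Lucas analogue of the usual Zeckendorf bound: for every $k\geq1$, any admissible sum whose largest index is at most $k$ is strictly less than $L_{k+1}$. Here \emph{admissible} means no two consecutive indices are used and $L_0$, $L_2$ are not both used. I will prove this by induction on $k$, using $L_{k+1}=L_k+L_{k-1}$. The maximal admissible sum with top index $k$ is $L_k+L_{k-2}+L_{k-4}+\cdots$. By the telescoping identity $L_k+L_{k-2}+\cdots=L_{k+1}-L_{k-1}+L_{k-1}-\cdots$, this sum lands at $L_{k+1}-1$. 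The $L_0/L_2$ restriction is exactly what prevents the even-index chain from reaching $L_{k+1}$. Concretely, $L_2+L_0=5=L_4-2$ would break the bound at small levels, so I will check the base cases $k=0,1,2,3$ by hand (sums $2<3$, $2$ or $1<3$, $3+\ldots$, etc.). For instance, with $k=2$ the admissible sums using indices in $\{0,1,2\}$ are $1,2,3$, all less than $L_3=4$.

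\emph{Existence.} For $n\in\{1,2,3\}$ I take $L_1$, $L_0$, $L_2$ respectively. For $n\geq 4$, I choose $k\geq 2$ maximal with $L_k\leq n$ and set $m=n-L_k<L_{k+1}-L_k=L_{k-1}$. If $m=0$ we are done. Otherwise, by induction $m$ has an admissible decomposition, and every index in it is at most $k-2$, since $m<L_{k-1}$ forces all its terms to be smaller than $L_{k-1}$. Careful: $L_0=2>L_1=1$, so "$m<L_{k-1}$" does not forbid index $0$ when $k=2$. This is harmless, because $k\geq 3$ once $n\geq4$. Adding $L_k$ creates no consecutive pair, and the $L_0/L_2$ condition survives: if $k=2$ then $n\leq3$, and if $k\geq3$ we add an index $\geq3$, which does not interact with that condition. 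One further point needs checking. If $m$'s decomposition has top index exactly $k-2$, the result is still admissible, but I must make sure no index $k-1$ appears. This holds because $L_{k-1}>m$, except in the irregular case $k-1=0$, which is excluded.

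\emph{Uniqueness.} Suppose two admissible decompositions of $n$ exist. I remove common terms from the top down until their largest indices differ, say $k>k'$. The stripped sets remain admissible, and their sums are still equal. The first sum is at least $L_k$, while the second is less than $L_{k'+1}\leq L_k$ by the preliminary bound. This is a contradiction, except when $k'+1>k$ fails to give $L_{k'+1}\leq L_k$, namely when $k'=0$ and $k=1$, where $L_1<L_0$. That case is dispatched directly: the sums would be $1+\cdots$ using only index $1$ versus $2$, and these are unequal.

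The main obstacle is the bookkeeping caused by $L_0=2$ exceeding $L_1$, which breaks monotonicity at the bottom. I expect essentially all the work to be in verifying the tail bound and the greedy step for small indices, and I would settle those cases by explicit enumeration up to $n\leq 7$.
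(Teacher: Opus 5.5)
The paper does not prove this statement; it quotes Brown's theorem \cite{brown}, so there is no in-paper argument to compare against. Your proof is correct. It uses greedy existence, and for uniqueness the tail bound ``an admissible sum with top index at most $k$ is $<L_{k+1}$.'' This is the standard Zeckendorf argument transplanted to $(L_i)$. Compared with the citation, it shows exactly where the extra condition $\lambda_0+\lambda_2\le1$ is needed: in the base case $k=2$ of the tail bound, where it blocks $L_0+L_2=5=L_1+L_3$.

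Some bookkeeping should be tidied:

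\begin{enumerate}
\item The tail bound is false at $k=0$, since $L_0=2\not<L_1$, so $k=0$ cannot serve as a base case. You only need $k=1,2$ by hand. For $k\ge3$, an admissible sum containing $L_k$ is $L_k$ plus an admissible sum with top index at most $k-2$, where $k-2\ge1$. Hence it is $<L_k+L_{k-1}=L_{k+1}$.
\item Your ``maximal chain'' description should end in $L_0$, not $L_1$, when $k\ge3$ is odd: $L_k+L_{k-2}+\cdots+L_3+L_0=L_{k+1}-1$.
\item In the uniqueness step you misdiagnose the exceptional case. The inequality $L_{k'+1}\le L_k$ always holds, because $k\ge k'+1\ge1$ and $(L_i)$ is increasing from index $1$. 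What fails when $k'=0$ is the tail bound itself, and it fails for every $k$, not just $k=1$. The fix is immediate: if $k'=0$, the second stripped sum equals $2$, while the first is $1$ if $k=1$ and at least $L_2=3$ if $k\ge2$.
\item You should say explicitly that if stripping common terms empties one side, it must empty both, since all $L_i>0$.
\end{enumerate}

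With these adjustments the argument is complete and self-contained.
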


We say that $n$ is \emph{Lucas-Niven} if $s_L(n)$ divides $n$, where
\[s_L(n)\coloneq\sum_{i=0}^\infty\lambda_i(n).\]

A common direction of study related to $b$-Niven numbers is their distribution in arithmetic progressions. An \emph{arithmetic progression} is an infinite sequence of positive integers with the same common difference between consecutive terms. If the common difference is $d$, then we call this arithmetic progression a \emph{$d$-AP}. Cooper and Kennedy \cite{ck1993} proved that the maximum number of consecutive Niven terms in a $1$-AP is $20$, while Grundman \cite{g1994} and Wilson \cite{wilson} generalized this result and proved that the maximum number of consecutive $b$-Niven terms in a $1$-AP is $2b$. Later, Grundman et al.\ \cite{ghw} studied the maximum number of consecutive $b$-Niven terms in a $d$-AP when $d>1$. Harrington et al.\ \cite{hlw} also showed that every arithmetic progression contains infinitely many $b$-Niven numbers.

One of the key results on Zeckendorf-Niven numbers in arithmetic progressions is provided by Grundman \cite{g2007}, and she proved that any sequence of consecutive Zeckendorf-Niven numbers greater than $6$ has a maximum length of $4$. We show that there are infinitely many Zeckendorf-Niven numbers as well as infinitely many Lucas-Niven numbers in any given arithmetic progression, with the proofs presented in Sections~\ref{sec:infinZN} and \ref{sec:infinLN}. We also show in Section~\ref{sec:infinZN} that every arithmetic progression contains an arbitrary number of consecutive terms that are all non-Zeckendorf-Niven. In Section~\ref{sec:2AP}, we give upper and lower bounds on the maximum number of consecutive Zeckendorf-Niven terms in a $2$-AP. In Section~\ref{sec:Fd}, we prove that the maximum number of consecutive terms in an $F_d$-AP that share the same $s_Z$-value is three, and show that there are infinitely many of such occurrences where all three terms are also Zeckendorf-Niven.

\section{Infinitely many Zeckendorf-Niven numbers in every arithmetic progression}\label{sec:infinZN}

We begin this section with several preliminary results on Fibonacci numbers.

\begin{proposition}[\cite{koshy}, Theorem~16.1]\label{prop:Fi|Fj}
For all $i,j,\in\N$ such that $i\mid j$, we have $F_i\mid F_j$.
\end{proposition}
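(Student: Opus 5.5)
The plan is to prove the divisibility through the addition formula $F_{m+n}=F_{m+1}F_n+F_mF_{n-1}$, valid for all $m\geq0$ and $n\geq1$, followed by induction on the quotient $j/i$.

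First I establish the addition formula. I fix $n$ and induct on $m$. For $m=0$ the right side is $F_1F_n+F_0F_{n-1}=F_n$. For $m=1$ it is $F_2F_n+F_1F_{n-1}=F_n+F_{n-1}=F_{n+1}$. Now suppose the formula holds for $m-1$ and $m$. Adding the two instances gives
\[
F_{m+n+1}=F_{m+n}+F_{m-1+n}=(F_{m+1}+F_m)F_n+(F_m+F_{m-1})F_{n-1}=F_{m+2}F_n+F_{m+1}F_{n-1},
\]
which is the formula for $m+1$. This completes the induction.

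Next, write $j=ki$ with $k\in\N$, and induct on $k$. The case $k=1$ is trivial. For the inductive step, assume $F_i\mid F_{ki}$. Apply the addition formula with $m=ki$ and $n=i\geq1$:
\[
F_{(k+1)i}=F_{ki+1}F_i+F_{ki}F_{i-1}.
\]
The first term is a multiple of $F_i$. The second term is a multiple of $F_{ki}$, and hence of $F_i$ by the inductive hypothesis. Therefore $F_i\mid F_{(k+1)i}$, which completes the induction and the proof.

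The only step with real content is the addition formula, and its proof is a routine two-term induction. One point needs care: the formula requires $n\geq1$ so that $F_{n-1}$ is defined. This holds because $i\in\N$.
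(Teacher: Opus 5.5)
Your proof is correct. The paper gives no proof of its own and only cites Koshy, Theorem~16.1; your argument, the addition formula $F_{m+n}=F_{m+1}F_n+F_mF_{n-1}$ followed by induction on $j/i$, is the standard textbook proof of that cited result, and your base cases and the $n\geq1$ requirement are handled properly.
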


\begin{proposition}[\cite{wall}]\label{prop:Fiperiodic}
For all $d\in\N$, the sequence $(F_i)_{i=0}^\infty$ is periodic modulo $d$.
\end{proposition}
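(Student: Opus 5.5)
The idea is a pigeonhole argument on consecutive pairs of residues, followed by using invertibility of the recurrence to show the sequence is purely periodic, not just eventually periodic. Fix $d\in\N$ and consider the sequence of pairs $P_i=(F_i \bmod d,\,F_{i+1}\bmod d)$ for $i\geq0$.

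First, since each pair lies in the finite set $(\mathbb{Z}/d\mathbb{Z})^2$, which has $d^2$ elements, the pigeonhole principle applied to $P_0,P_1,\dots,P_{d^2}$ gives indices $0\leq a<b\leq d^2$ with $P_a=P_b$.

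Second, the recurrence determines $P_{i+1}$ from $P_i$ via the map $(x,y)\mapsto(y,x+y)$. Hence $P_{a+k}=P_{b+k}$ for all $k\geq0$, so the sequence is eventually periodic with period $b-a$.

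Third, to remove the preperiod, note that the same map is invertible on $(\mathbb{Z}/d\mathbb{Z})^2$, with inverse $(x,y)\mapsto(y-x,x)$; equivalently, $F_{i-1}=F_{i+1}-F_i$ lets us run the recurrence backward. So $P_a=P_b$ with $a\geq1$ implies $P_{a-1}=P_{b-1}$. Descending step by step gives $P_0=P_{b-a}$. Combined with the forward propagation, this yields $F_{i+(b-a)}\equiv F_i\pmod d$ for all $i\geq0$, which is pure periodicity.

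The only point needing care is this backward step, which upgrades eventual periodicity to periodicity from $i=0$. It works precisely because the recurrence can be solved uniquely for the earlier term modulo $d$, with no division required. The rest is routine.
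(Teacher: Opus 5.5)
Your proof is correct and complete. The pigeonhole step on the $d^2$ residue pairs, forward propagation via $(x,y)\mapsto(y,x+y)$, and the backward step via its inverse $(x,y)\mapsto(y-x,x)$ together give $F_{i+(b-a)}\equiv F_i\pmod d$ for all $i\geq 0$, with $1\leq b-a\leq d^2$. The paper does not prove this proposition itself; it cites Wall, and your argument is essentially Wall's standard one: find a repeated pair of consecutive residues, then run the recurrence backward to get pure rather than eventual periodicity.
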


The period of the sequence $(F_i)_{i=0}^\infty$ modulo $d$ is called the \emph{$d$-th Pisano period} and is denoted by $\pi(d)$. Some of the Pisano periods are known.

\begin{proposition}[\cite{ehrlich}]\label{prop:Pisano}
For all $j\in\N$, we have $\pi(F_{2j})=4j$ and $\pi(F_{2j+1})=8j+4$.
\end{proposition}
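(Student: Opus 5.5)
The plan is to fix $n\in\{2j,2j+1\}$ and write $m=F_n$. We first show that shifting the Fibonacci sequence by $n$ indices acts, modulo $m$, as multiplication by a unit whose order we can compute exactly. Then we use the zeros of the sequence to show that the period cannot be shorter. The statement can only hold for $j\ge 2$: $\pi(F_2)=\pi(1)=1$ and $\pi(F_3)=\pi(2)=3$ are exceptions. So I will assume $n\ge 4$ throughout; the proof genuinely uses this.

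\emph{Upper bound.} Start from the addition formula $F_{n+k}=F_kF_{n+1}+F_{k-1}F_n$, which follows by induction on $k$. Modulo $m$ it gives $F_{n+k}\equiv F_{n+1}F_k$ for all $k\ge0$, and iterating gives $F_{tn+k}\equiv F_{n+1}^tF_k \pmod m$. Cassini's identity is $F_{n+1}F_{n-1}-F_n^2=(-1)^n$. Since $F_{n-1}\equiv F_{n+1}\pmod{F_n}$, it yields $F_{n+1}^2\equiv(-1)^n \pmod m$.
\begin{itemize}
\item If $n=2j$, then $F_{n+1}^2\equiv1$, so the sequence modulo $m$ repeats after $2n=4j$ steps.
\item If $n=2j+1$, then $F_{n+1}^2\equiv-1$ and $F_{n+1}^4\equiv1$, so the sequence repeats after $4n=8j+4$ steps.
\end{itemize}
Hence $\pi(m)$ divides $4j$, respectively $8j+4$.

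\emph{Lower bound.} At $p=\pi(m)$ we have $F_p\equiv0 \pmod m$. Use the strong divisibility property $\gcd(F_p,F_n)=F_{\gcd(p,n)}$, which extends Proposition~\ref{prop:Fi|Fj}. If $F_n\mid F_p$, then $F_{\gcd(p,n)}=F_n$, and for $n\ge3$ this forces $n\mid p$, since $F_d<F_n$ for $1\le d<n$. Write $p=tn$ with $t\ge1$. Then $1\equiv F_{p+1}\equiv F_{n+1}^t$, using $F_1=1$. So it suffices to show that $F_{n+1}$ has multiplicative order exactly $2$ (for $n$ even) or exactly $4$ (for $n$ odd) modulo $m$. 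Note that $F_{n+1}\equiv F_{n-1}\pmod m$, and for $n\ge4$ we have $1<F_{n-1}<F_n$, so $F_{n+1}\not\equiv1$.
\begin{itemize}
\item For $n$ even, this alone gives order $2$.
\item For $n$ odd (so $n\ge5$), $F_{n+1}^2\equiv-1\not\equiv1$ because $m\ge5>2$. Also $F_{n+1}^3\equiv-F_{n+1}$, and this is not $\equiv 1$: otherwise $F_{n-1}\equiv-1$, i.e.\ $F_{n-1}=F_n-1$, i.e.\ $F_{n-2}=1$, which is false for $n\ge5$. So the order is $4$.
\end{itemize}
Combining both bounds gives $\pi(F_{2j})=4j$ and $\pi(F_{2j+1})=8j+4$.

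The main obstacle is the lower bound, specifically the claim that every zero of $(F_k)$ modulo $F_n$ occurs at a multiple of $n$. This relies on the gcd property of Fibonacci numbers, which I would quote from \cite{koshy} or prove via the Euclidean algorithm and the addition formula. The remaining work is the small-case care needed to exclude the exceptional values $j=1$.
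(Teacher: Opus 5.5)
The paper gives no proof of this proposition; it is quoted from Ehrlich. Your argument is therefore an independent, self-contained proof, and it is correct.

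\textbf{The case $j=1$.} Your exception is a genuine correction to the statement as printed: $\pi(F_2)=\pi(1)=1$ and $\pi(F_3)=\pi(2)=3$, so the claimed equalities fail for $j=1$. This does not damage the paper. The only place the proposition is used is the proof of Theorem~\ref{thm:infinZN}, where $j=a\pi(d)$ can equal $1$. That proof needs only $\pi(F_{2j})\mid 4j$ and $\pi(F_{2j+1})\mid 8j+4$, and your upper-bound half proves exactly this for every $j\geq1$. That half does not actually need $n\geq4$: Cassini's identity and the shift congruence hold for all $n\geq2$, and the case $m=1$ is trivial.

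\textbf{Simplifying the lower bound.} You do not need the strong divisibility property $\gcd(F_p,F_n)=F_{\gcd(p,n)}$, because your own congruence already locates the zeros. Write $p=\pi(m)=tn+k$ with $0\le k<n$. Then $0\equiv F_p\equiv F_{n+1}^tF_k \pmod{F_n}$. Since $F_{n+1}^2\equiv\pm1$, $F_{n+1}$ is a unit modulo $F_n$, so $F_n\mid F_k$. This forces $k=0$, because $0<F_k<F_n$ for $1\le k<n$ when $n\geq3$.

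\textbf{A redundant check.} For odd $n$, verifying $F_{n+1}^3\not\equiv1$ is unnecessary. Since $F_{n+1}^4\equiv1$, the order divides $4$, so excluding orders $1$ and $2$ suffices.

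With these changes the whole proof rests only on the addition formula and Cassini's identity. It also states the correct range $j\geq2$ for the equalities, which the paper's citation does not.
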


The following lemma allows us to focus on arithmetic progressions that have a Fibonacci number as the common difference.

\begin{lemma}\label{lem:d=F2jsubseq}
For every $a,d\in\N$, there exists $j\in\N$ such that $F_{2j}\geq a$ and $(a+kF_{2j})_{k=0}^\infty$ is a subsequence of $(a+kd)_{k=0}^\infty$.
\end{lemma}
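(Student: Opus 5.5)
The plan is to reduce the claim to a divisibility statement. The sequence $(a+kF_{2j})_{k=0}^\infty$ is a subsequence of $(a+kd)_{k=0}^\infty$ exactly when $d\mid F_{2j}$. Indeed, if $F_{2j}=md$ for some $m\in\N$, then $a+kF_{2j}=a+(km)d$, and the indices $km$ are strictly increasing in $k$. So it suffices to find $j\in\N$ with $d\mid F_{2j}$ and $F_{2j}\geq a$.

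To get $d\mid F_{2j}$, I will use Proposition~\ref{prop:Fiperiodic}. The sequence $(F_i)$ is periodic modulo $d$, and since $F_0=0$, we have $F_{t\pi(d)}\equiv F_0=0\pmod d$ for every $t\in\N$. (If one worries that the period might only be eventual, note that the recurrence can be run backwards, $F_{i-2}=F_i-F_{i-1}$. So the pair map on $(\mathbb{Z}/d)^2$ is a bijection, and the periodicity is pure from index $0$.) Thus $d\mid F_{t\pi(d)}$ for all $t\in\N$.

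Next I make the index even. Take $2j=2t\pi(d)$, i.e.\ $j=t\pi(d)$. Then $d\mid F_{2j}$ follows either directly from the previous step, or from Proposition~\ref{prop:Fi|Fj}, since $\pi(d)\mid 2t\pi(d)$ gives $F_{\pi(d)}\mid F_{2j}$.

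Finally, $F_{2j}\geq a$ holds once $t$ is large, since the Fibonacci numbers grow without bound. For example, $F_n\geq n-1$ for all $n\geq 1$, so any $t$ with $2t\pi(d)\geq a+1$ suffices.

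None of the steps is a real obstacle. The only point needing care is that $F_0=0$ really recurs modulo $d$, i.e.\ that the periodicity starts at index $0$. Proposition~\ref{prop:Fiperiodic} asserts this, and the backward-recurrence remark covers it in any case.
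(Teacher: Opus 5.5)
Your proof is correct and follows essentially the same route as the paper: take $j$ to be a multiple of $\pi(d)$, so that $d\mid F_{2j}$ because $F_0=0$ recurs modulo $d$, and make the multiple large enough that $F_{2j}\geq a$. The only difference is that the paper fixes the multiple explicitly with $j=a\pi(d)$ and bounds $F_{2j}\geq F_{2a}\geq a$, whereas you let $t$ vary and use $F_n\geq n-1$.
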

\begin{proof}
Let $j=a\pi(d)$. Then $F_{2j}\geq F_{2a}\geq a$. The conclusion follows since $F_{2j}=F_{2a\pi(d)}-F_0\equiv0\pmod{d}$ by the definition of Pisano period.
\end{proof}

The next lemma shows the existence of an appropriate term in the arithmetic progression $(a+kF_{2j})_{k=0}^\infty$ that achieves a prescribed number of nonzero summands in its Zeckendorf decomposition.

\begin{lemma}\label{lem:prescribedsZ}
Let $(a+kF_{2j})_{k=0}^\infty$ be a given arithmetic progression. Then for every integer $m>s_Z(a)$, there exists a term $N$ in the arithmetic progression $(a+kF_{2j})_{k=0}^\infty$ such that $s_Z(N)=m$.
\end{lemma}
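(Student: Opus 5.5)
The plan is to build $N$ explicitly by appending to the Zeckendorf decomposition of $a$ extra Fibonacci summands whose indices are multiples of $2j$. By Proposition~\ref{prop:Fi|Fj}, any $F_{2jt}$ with $t\in\N$ is divisible by $F_{2j}$. So adding such terms to $a$ keeps us inside the progression $(a+kF_{2j})_{k=0}^\infty$.

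Concretely, let $r=m-s_Z(a)\geq1$. Let $M$ be the largest index $i$ with $\zeta_i(a)=1$, so that $a=\sum_{i=2}^{M}\zeta_i(a)F_i$. Choose an integer $t_1$ with $2jt_1\geq M+2$, and set $t_\ell=t_1+\ell-1$ for $\ell=1,\dots,r$. Define
\[N=a+\sum_{\ell=1}^{r}F_{2jt_\ell}.\]
Since $F_{2j}\mid F_{2jt_\ell}$ for each $\ell$, we have $N-a=kF_{2j}$ for some nonnegative integer $k$, so $N$ is a term of the progression.

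It remains to verify that $s_Z(N)=m$. The indices $2jt_1<2jt_2<\dots<2jt_r$ differ pairwise by at least $2j\geq2$, so no two of them are adjacent. The smallest of them, $2jt_1$, is at least $M+2$, so none is adjacent to or equal to any index in the decomposition of $a$. Therefore the combined index set $\{i:\zeta_i(a)=1\}\cup\{2jt_\ell:1\le\ell\le r\}$ satisfies the non-adjacency condition. The resulting expression is then a valid Zeckendorf-type representation of $N$, and by the uniqueness in Zeckendorf's theorem it is the Zeckendorf decomposition of $N$. Counting summands gives $s_Z(N)=s_Z(a)+r=m$.

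There is no serious obstacle here. The only point requiring care is the choice of $t_1$ large enough, together with $2j\geq2$, to guarantee non-adjacency both among the new indices and with the top index $M$ of $a$. That is exactly what lets us invoke uniqueness rather than having to track carries in the Fibonacci base.
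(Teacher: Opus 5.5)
Your proposal is correct and is essentially the paper's proof: both append $m-s_Z(a)$ Fibonacci numbers with indices that are consecutive multiples of $2j$, placed above the largest index of $a$, then use Proposition~\ref{prop:Fi|Fj} for membership in the progression and uniqueness of the Zeckendorf decomposition to count summands. The only difference is cosmetic: the paper makes the new indices large enough via the condition $F_{2i_0j}\geq a$, while you impose the index condition $2jt_1\geq M+2$ directly.
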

\begin{proof}
Let $i_0\in\N$ such that $F_{2i_0j}\geq a$, and let
\[N=a+\sum_{i=i_0+1}^{i_0+m-s_Z(a)}F_{2ij}.\]
Then $N$ is a term in the arithmetic progression $(a+kF_{2j})_{k=0}^\infty$ by Proposition~\ref{prop:Fi|Fj}. Furthermore, there are no consecutive Fibonacci numbers inside the summation in the definition of $N$, so $s_Z(N)=s_Z(a)+m-s_Z(a)=m$ by the uniqueness of Zeckendorf decomposition.
\end{proof}

Now, we are ready to prove the main theorem of this section.

\begin{theorem}\label{thm:infinZN}
Every arithmetic progression contains infinitely many Zeckendorf-Niven numbers.
\end{theorem}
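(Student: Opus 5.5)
Given an arithmetic progression $(a+kd)_{k=0}^\infty$, we first apply Lemma~\ref{lem:d=F2jsubseq} to obtain $j\in\N$ with $F_{2j}\geq a$ such that $(a+kF_{2j})_{k=0}^\infty$ is a subsequence of the original progression. It therefore suffices to find infinitely many Zeckendorf-Niven terms in $(a+kF_{2j})_{k=0}^\infty$. We may replace $a$ by $a+k_0F_{2j}$ for any fixed $k_0$ without loss. The idea is to choose a target value $m$ for $s_Z$ so that divisibility by $m$ is automatic for every term of the subsequence built in Lemma~\ref{lem:prescribedsZ}, and then let $m$ vary over infinitely many values.

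The key modular observation is this. Suppose $m$ is coprime to $F_{2j}$. Then the residues $a+kF_{2j}$ for $k=0,\dots,m-1$ cover every class modulo $m$, so some term is divisible by $m$. Being divisible by $m$ alone is not enough, however. We need a term that is simultaneously divisible by $m$ and has $s_Z$ equal to $m$. I would therefore refine the construction in Lemma~\ref{lem:prescribedsZ} to give control over the residue modulo $m$ as well as the number of summands. Concretely, I would take
\[
N=a+\sum_{i\in S}F_{2ij},
\]
where $S$ is a set of $m-s_Z(a)$ indices, all greater than $i_0$ and pairwise differing by at least $1$. Any such $S$ gives $s_Z(N)=m$, since the summands $F_{2ij}$ are non-consecutive and all exceed $a$. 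Each such $N$ lies in the progression by Proposition~\ref{prop:Fi|Fj}. The task is then to pick $S$ so that $N\equiv 0\pmod m$.

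I expect this residue-control step to be the main obstacle, and I would handle it with the Pisano period. By Proposition~\ref{prop:Fiperiodic}, the sequence $(F_{2ij}\bmod m)_i$ is periodic. Choosing indices $i$ that are multiples of $\pi(m)$ gives $F_{2ij}\equiv 0\pmod m$, which makes the summands invisible modulo $m$ but then leaves $N\equiv a\pmod m$, so I need some flexibility. A cleaner route is to choose $m$ so that $m\mid a$ is achievable directly. Replace $a$ by a term $a'=a+k_0F_{2j}$ that is divisible by a prime $p$ with $p\nmid F_{2j}$; such $k_0$ exists because $F_{2j}$ is invertible mod $p$. Then take $m=p$, and require every index in $S$ to be a multiple of $\pi(p)$. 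This forces $N\equiv a'\equiv 0\pmod p$, while $s_Z(N)=p$ provided $p>s_Z(a')$.

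The remaining point is the circularity: $a'$ depends on $p$, so $s_Z(a')$ also depends on $p$. To break it I would bound $k_0<p$, which gives $a'<a+pF_{2j}$. Since $s_Z(x)=O(\log x)$, this yields $s_Z(a')=O(\log p)$, which is less than $p$ for all sufficiently large primes $p$. Taking infinitely many primes $p$ not dividing $F_{2j}$ therefore produces Zeckendorf-Niven terms $N$ with $s_Z(N)=p$. These terms are distinct because their $s_Z$-values differ, which gives infinitely many Zeckendorf-Niven numbers in the original progression.
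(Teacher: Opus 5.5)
Your proof is correct, and it takes a different route from the paper's.

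\textbf{The paper's route.} The paper fixes the target value $s_Z(N)=F_{2j+1}$ and takes a term with that many summands from Lemma~\ref{lem:prescribedsZ}. It then re-indexes every summand $F_{r_i}$ to some $F_{t_i}$ with $t_i\equiv r_i\pmod{4j}$ and $2j+1\mid t_i$, using the explicit Pisano periods $\pi(F_{2j})=4j$ and $\pi(F_{2j+1})=8j+4$ of Proposition~\ref{prop:Pisano}. Divisibility is thus forced summand by summand, and infinitude comes from rerunning the argument with larger $a$.

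\textbf{Your route.} You put all of the divisibility into the starting term. For a large prime $p\nmid F_{2j}$ you take $a'=a+k_0F_{2j}$ with $0\le k_0<p$ and $p\mid a'$. You then pad with summands $F_{2ij}$, $\pi(p)\mid i$, which vanish modulo both $F_{2j}$ and $p$. The only cost is the self-referential requirement $p>s_Z(a')$. Your logarithmic bound settles this: $s_Z(x)=s$ forces $x\ge F_2+F_4+\dots+F_{2s}=F_{2s+1}-1$, and $a'<a+pF_{2j}$.

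\textbf{What each approach buys.} Your version needs only the existence of Pisano periods and the divisibility $F_i\mid F_k$ for $i\mid k$. It gives infinitely many Zeckendorf-Niven terms at once, distinct because their $s_Z$-values are distinct primes. It is essentially the padding device the paper itself uses for Theorem~\ref{thm:nonZN}. If you pad directly with $F_n$ for $\lcm(\pi(d),\pi(m))\mid n$, you can skip Lemma~\ref{lem:d=F2jsubseq} entirely. This also shows that every sufficiently large $m$ coprime to $d$ is the $s_Z$-value of a Zeckendorf-Niven term. The paper's summand-by-summand residue control, by contrast, is the template that carries over to the Lucas case in Section~\ref{sec:infinLN}. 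There, summands vanishing modulo $d$ or $p$ need not exist; for instance, no Lucas number is divisible by $5$.

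\textbf{One small fix.} The threshold $i_0$ must satisfy $F_{2i_0j}\ge a'$, not $F_{2i_0j}\ge a$. It is this condition, not merely $F_{2ij}>a'$, that keeps every padding index at least two above the top index in the Zeckendorf decomposition of $a'$.
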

\begin{proof}
It suffices to show that for every $a,d\in\N$, there exists a Zeckendorf-Niven number in the arithmetic progression $(a+kd)_{k=0}^\infty$, since the original statement can then be proved inductively by replacing the value of $a$ with a larger value at each step. By Lemma~\ref{lem:d=F2jsubseq}, we can restrict our attention to arithmetic progressions of the form $(a+kF_{2j})_{k=0}^\infty$ where $F_{2j}\geq a$. Since $F_{2j+1}>F_{2j}\geq a\geq s_Z(a)$, there exists a term $N$ in the arithmetic progression $(a+kF_{2j})_{k=0}^\infty$ such that $s_Z(N)=F_{2j+1}$ by Lemma~\ref{lem:prescribedsZ}. To complete the proof, we are going to transform the Zeckendorf decomposition of $N$ term-by-term so that each new summand is divisible by $s_Z(N)$ and yet maintains the same residue modulo the common difference $F_{2j}$ of the arithmetic progression.

Let
\[N=\sum_{i=1}^{F_{2j+1}}F_{r_i}\]
be the Zeckendorf decomposition of $N$, where $r_1\geq2$ and $r_{i+1}-r_i\geq2$ for all $1\leq i\leq F_{2j+1}-1$. Since $2j+1$ and $4j$ are coprime, for each $r_i$, there exists an integer $0\leq m_i<2j+1$ such that $m_i\equiv-r_i(4j)^{-1}\pmod{2j+1}$, i.e., $2j+1\mid r_i+4jm_i$. Hence, $F_{2j+1}\mid F_{r_i+4jm_i}$ by Proposition~\ref{prop:Fi|Fj}. Furthermore, let $t_i=r_i+4jm_i+(2j+1)4ij$ for each $1\leq i\leq F_{2j+1}$, and define
\[N'=\sum_{i=1}^{F_{2j+1}}F_{t_i}.\]
Note that the summation provided is the Zeckendorf decomposition of $N'$ since $t_1\geq2$ and
\[t_{i+1}-t_i=r_{i+1}-r_i+4j(m_{i+1}-m_i+2j+1)>r_{i+1}-r_i\geq2\]
for all $1\leq i\leq F_{2j+1}-1$, thus $s_Z(N')=F_{2j+1}$. By Proposition~\ref{prop:Pisano}, we have $\pi(F_{2j+1})=8j+4$, which divides $t_i-(r_i+4jm_i)$ for all $1\leq i\leq F_{2j+1}$, so $F_{t_i}\equiv F_{r_i+4jm_i}\equiv0\pmod{F_{2j+1}}$. Therefore, $F_{2j+1}\mid N'$ and $N'$ is Zeckendorf-Niven.

Finally, by Proposition~\ref{prop:Pisano}, $\pi(F_{2j})=4j$, which divides $t_i-r_i$ for all $1\leq i\leq F_{2j+1}$, implying that $F_{2j}\mid F_{t_i}-F_{r_i}$. Hence, $F_{2j}\mid N'-N$, i.e., $N'$ is a term in the arithmetic progression $(a+kF_{2j})_{k=0}^\infty$. 
\end{proof}

Before ending this section, we provide a result concerning the distribution of non-Zeckendorf-Niven numbers in arithmetic progressions.

\begin{theorem}\label{thm:nonZN}
Every arithmetic progression contains infinitely many subsequences of $\ell$ consecutive non-Zeckendorf-Niven terms, where $\ell$ is an arbitrary positive integer.
\end{theorem}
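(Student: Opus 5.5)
We need, for any $a,d$ and $\ell$, $\ell$ consecutive terms $N, N+d,\dots,N+(\ell-1)d$ of $(a+kd)_{k=0}^\infty$ that are all non-Zeckendorf-Niven. Infinitely many such blocks then follow as in the proof of Theorem~\ref{thm:infinZN}, by repeating the construction with a larger starting value.

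The plan is to choose the block so that every term has the same prescribed value $s_Z=p$, where $p$ is a prime not dividing any of the terms. First I will fix a prime $p>d$. Then the terms $x, x+d,\dots,x+(\ell-1)d$ run over distinct residues modulo $p$ once $\ell\le p$; I will also take $p>\ell$. Exactly one residue class of $x$ modulo $p$ makes each individual term divisible by $p$, so at most $\ell$ residue classes of $x$ are bad. Since $p>\ell$, at least one good class of $x$ remains.

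Next comes the construction. Pick a large base block $B$ that is a term of the progression, and let $M$ be an index with $F_M$ far larger than $B+\ell d$. Set $x=B+S$, where $S$ is a sum of nonconsecutive Fibonacci numbers with indices beyond $M$. Then the Zeckendorf decomposition of $x+kd$ is that of $B+kd$ together with the summands of $S$, so $s_Z(x+kd)=s_Z(B+kd)+s_Z(S)$. The difficulty is that $s_Z(B+kd)$ varies with $k$.

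To fix this, I will use the freedom in Lemma~\ref{lem:d=F2jsubseq}: replace $d$ by $F_{2j}$ with $F_{2j}$ large. Take $B$ with Zeckendorf decomposition supported on indices at most $2j-3$, with $B\equiv a \pmod d$, which is possible by taking $B$ to be the reduction of $a$ suitably shifted. Then $B+kF_{2j}$ for $0\le k<\ell$ has $s_Z$ controlled by the Zeckendorf digits of $kF_{2j}$. That count is still not constant, so instead I will use spaced multiples: the terms $B+F_{2ij}$ lie in the progression by Proposition~\ref{prop:Fi|Fj}, but they are not consecutive.

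A cleaner route is the following. Consecutive terms $y+kd$, $0\le k<\ell$, have $s_Z(y+kd)\in\{s_Z(y)-c,\dots,s_Z(y)+c\}$ for some constant $c$ depending only on $\ell$ and $d$, provided $y$ has a long zero gap in its low Zeckendorf digits. So I will choose $y$ whose last $R$ Zeckendorf digits equal those of $a$ mod something and are followed by a long string of zeros. Then $s_Z(y+kd)=s_Z(\text{high part})+s_Z(\text{low}+kd)$, where the low values $u_k:=s_Z(\text{low}+kd)$ are bounded by $c$.

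Using Lemma~\ref{lem:prescribedsZ}-style padding, I can make the high part have $s_Z$ equal to $H$ for any large $H$, with the high part divisible by $F_{2j}$ so that we stay in the progression. Then I need $H+u_k\nmid y+kd$ for all $k$. I will choose $H$ so that $H+u_k$ are all even, or all multiples of a fixed prime $q$, while $y+kd$ is not. This is the main obstacle. My first attempt is to take $H\equiv -u_k \pmod q$ for each $k$ simultaneously, which only works if the $u_k$ agree mod $q$. So instead I will choose a modulus $Q=\prod q_k$ of distinct primes $q_k>\ell d$ and use the CRT to choose $H$ with $q_k\mid H+u_k$. Then I will adjust the high part, which ranges over all residues mod $Q$ as it varies through multiples of $\lcm(d,\cdot)$ by the Pisano periodicity of Proposition~\ref{prop:Fiperiodic}, so that $q_k\nmid y+kd$. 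This gives the $\ell$ non-Zeckendorf-Niven terms.
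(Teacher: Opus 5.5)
Only your last route is an actual argument; the first three plans should be deleted. The common prime value $s_Z=p$ is never achieved, and the $F_{2j}$ and spaced-multiples detours are abandoned. The last route has a sound skeleton. Take low part $a$, pad with exactly $H$ well-separated high summands, get $s_Z(y+kd)=H+u_k$ with $u_k=s_Z(a+kd)$, and use CRT so that $q_k\mid H+u_k$.

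\textbf{The gap.} The decisive final step is only asserted. You claim the padding "ranges over all residues mod $Q$" while staying $\equiv 0\pmod d$ with exactly $H$ summands. Pisano periodicity does not give this, and in the form you describe it can fail. If the padding is a sum of terms $F_{2ij}$ as in Lemma~\ref{lem:prescribedsZ}, every summand is a multiple of $F_{2j}$. If some $q_k$ divides $F_{2j}$, the padding is then always $\equiv 0\pmod{q_k}$. Since you only required $q_k>\ell d$, nothing prevents $q_k\mid a+kd$ as well. Then $q_k\mid y+kd$, and you get no obstruction for that $k$.

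\textbf{The fix.} Choose the distinct primes with $q_k>a+(\ell-1)d$. Put all $H$ padding summands at well-separated indices divisible by $\pi(dQ)$, above the top index of $a+(\ell-1)d$. Then the padding is $\equiv 0\pmod{dQ}$, so $y+kd\equiv a+kd\pmod{q_k}$ with $0<a+kd<q_k$. Hence $q_k\nmid y+kd$ while $q_k\mid s_Z(y+kd)$, and no surjectivity claim is needed.

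\textbf{Comparison with the paper.} With this fix, your proof is a variant of the paper's. The paper drops the auxiliary primes and CRT and uses $s_Z(a_k)$ itself as the modulus. It takes the padding count $g>a+(\ell-1)d$, so $s_Z(a_k)=s_Z(a+kd)+g>a+kd$. It then places the padding at multiples of $j=\lcm\{\pi(s_Z(a+kd)+g)\}\cup\{\pi(d)\}$. This makes the padding vanish modulo every $s_Z(a_k)$ and modulo $d$. Both arguments rest on the same decoupling: the number of high summands fixes $s_Z$, and their indices fix residues. The paper's version is shorter. Yours shows the extra flexibility that each digit sum can be made divisible by a prescribed prime.
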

\begin{proof}
Let $(a+kd)_{k=0}^\infty$ be a given arithmetic progression. Let $g\in\N$ such that $g>a+(\ell-1)d$, and let
\[j=\lcm\big(\{\pi(s_Z(a+kd)+g):0\leq k\leq\ell-1\}\cup\{\pi(d)\}\big).\]
Note that $j\geq2$ since $\pi(n)\geq2$ for all integers $n\geq2$. Further let $m\in\N$ such that $jm\geq r+2$, where $F_r$ is the largest summand in the Zeckendorf decomposition of $a+(\ell-1)d$.

For each integer $0\leq k\leq\ell-1$, let
\[a_k=a+kd+\sum_{i=1}^gF_{j(m+i)}.\]
If we replace $a+kd$ by its Zeckendorf decomposition, then the above summation is the Zeckendorf decomposition of $a_k$ due to the construction of $jm$ and the fact that $j\geq2$. Hence, $s_Z(a_k)=s_Z(a+kd)+g$, implying that $s_Z(a_k)\mid\sum_{i=1}^gF_{j(m+i)}$ by the definition of $j$. However, $s_Z(a_k)>g\geq a+kd>0$, so $s_Z(a_k)\nmid a_k$, i.e., $a_k$ is non-Zeckendorf-Niven. The conclusion follows since $a_k-a=kd+\sum_{i=1}^gF_{j(m+i)}\equiv0\pmod{d}$.
\end{proof}

\section{Infinitely many Lucas-Niven numbers in every arithmetic progression}\label{sec:infinLN}

To prove a result analogous to Theorem~\ref{thm:infinZN} for the Lucas sequence, we need to make small modifications to our technique due to the lack of divisibility properties in the Lucas sequence such as those given by Proposition~\ref{prop:Fi|Fj} and Lemma~\ref{lem:d=F2jsubseq}.

Similar to the Fibonacci sequence, it is known that the Lucas sequence is also periodic modulo $d$ for all $d\in\N$ \cite{wall}. Denote this period by $\pi_L(d)$. In the following proposition, we are going to state an interesting relation between $\pi(d)$ and $\pi_L(d)$ as well as some property of $\pi(d)$, which directly implies a subsequent lemma on $\pi_L(d)$.

\begin{proposition}[\cite{wall}]\label{prop:piandpiL}\ 
\begin{enumerate}[$(i)$]
\item For every $d\in\N$ such that $d\not\equiv0\pmod{5}$, we have $\pi(d)=\pi_L(d)$.
\item\label{item:piL} For every prime $p$ such that $p\equiv-1\pmod{10}$, we have $\pi(p)\mid p-1$.
\end{enumerate}
\end{proposition}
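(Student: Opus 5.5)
For part $(i)$ I would compare the two sequences through the standard identities linking Fibonacci and Lucas numbers, and for part $(ii)$ I would use Binet's formula over the finite field $\mathbb{Z}/p\mathbb{Z}$. Throughout I use one elementary fact. Both sequences are purely periodic modulo $d$, since the recurrence can be run backwards. Hence the set of periods of either sequence is closed under subtraction, so the minimal period divides every period. In particular, to show $\pi(d)\mid T$ it suffices to show that $F_{n+T}\equiv F_n\pmod d$ for all $n\geq0$.

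For $(i)$, I would first establish, by induction from the recurrences, the identities $L_n=F_{n-1}+F_{n+1}$ for $n\geq1$ and $5F_n=L_{n-1}+L_{n+1}$ for $n\geq1$.

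To show $\pi_L(d)\mid\pi(d)$, put $T=\pi(d)$. The first identity gives $L_{n+T}\equiv L_n\pmod d$ for all $n\geq1$. For $n=0$, running the recurrence backwards gives $L_T=L_{T+2}-L_{T+1}\equiv L_2-L_1=L_0$. So $T$ is a period of $(L_n)$ modulo $d$.

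Conversely, put $T=\pi_L(d)$. The second identity gives $5F_{n+T}\equiv 5F_n\pmod d$ for $n\geq1$. Since $5\nmid d$ and $d$ is coprime to the prime $5$, the number $5$ is invertible modulo $d$, so $F_{n+T}\equiv F_n$ for $n\geq1$. The case $n=0$ follows by the same backward step. Hence $\pi(d)\mid\pi_L(d)$, and the two periods are equal. The only care needed here is this boundary case $n=0$ and the invertibility of $5$; the latter is exactly where the hypothesis $d\not\equiv0\pmod5$ is used.

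For $(ii)$, let $p\equiv-1\pmod{10}$. Then $p$ is an odd prime with $p\equiv-1\pmod 5$. By quadratic reciprocity, $\left(\frac{5}{p}\right)=\left(\frac{p}{5}\right)=\left(\frac{-1}{5}\right)=1$, so $5$ is a nonzero square modulo $p$. Since $p$ is odd, $2$ is invertible, so the polynomial $x^2-x-1$, whose discriminant is $5$, has two distinct roots $\alpha,\beta\in\mathbb{Z}/p\mathbb{Z}$. Both roots are nonzero because $\alpha\beta=-1$.

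Next I would verify the Binet formula $F_n\equiv(\alpha^n-\beta^n)(\alpha-\beta)^{-1}\pmod p$ by induction. It holds for $n=0,1$, and the right-hand side satisfies the Fibonacci recurrence because $\alpha^2=\alpha+1$ and $\beta^2=\beta+1$. By Fermat's little theorem $\alpha^{p-1}=\beta^{p-1}=1$, so $F_{n+p-1}\equiv F_n\pmod p$ for all $n$. Thus $p-1$ is a period, and $\pi(p)\mid p-1$ by the divisibility fact above.

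The main obstacle I expect is the step showing that $5$ is a quadratic residue, which requires invoking quadratic reciprocity. The remaining steps are routine.
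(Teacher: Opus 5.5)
Your proof is correct. The paper gives no argument for this proposition; it imports both parts from Wall. So your write-up is a self-contained substitute rather than a variant of an in-paper proof.

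In part $(i)$, the identities $L_n=F_{n-1}+F_{n+1}$ and $5F_n=L_{n-1}+L_{n+1}$ show exactly where the hypothesis $5\nmid d$ enters: the invertibility of $5$ modulo $d$. Together with your backward step at $n=0$ and the closure of the set of periods under subtraction, nothing else is needed.

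In part $(ii)$, your Binet argument over $\mathbb{Z}/p\mathbb{Z}$ uses only $\left(\frac{5}{p}\right)=1$. It therefore proves $\pi(p)\mid p-1$ for all $p\equiv\pm1\pmod{10}$, which is more than stated.

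The same setup also gives what the paper actually needs more directly. By the same induction, $L_n\equiv\alpha^n+\beta^n\pmod p$, since $\alpha^0+\beta^0=2$ and $\alpha+\beta=1$. Hence $L_{n+p-1}\equiv L_n\pmod p$, which is Lemma~\ref{lem:piL} immediately. The paper uses part $(i)$ only to deduce that lemma, so this route would bypass $(i)$ entirely.

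The citation buys the paper brevity. Your version makes the result self-contained, with quadratic reciprocity as the only external input (in fact only the evaluation of $\left(\frac{5}{p}\right)$).
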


\begin{lemma}\label{lem:piL}
For every prime $p$ such that $p\equiv-1\pmod{10}$, we have $\pi_L(p)\mid p-1$.
\end{lemma}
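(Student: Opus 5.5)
The lemma should follow at once from Proposition~\ref{prop:piandpiL}, with no independent computation of Lucas periods. Let $p$ be a prime with $p\equiv-1\pmod{10}$. The plan is to move from $\pi_L(p)$ to $\pi(p)$ using part~$(i)$, and then to apply part~\eqref{item:piL}.

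First we check that part~$(i)$ applies with $d=p$, that is, that $p\not\equiv0\pmod{5}$. Since $p\equiv-1\pmod{10}$, we have $p\equiv4\pmod{5}$, so $5\nmid p$. Part~$(i)$ then gives $\pi_L(p)=\pi(p)$. Next, part~\eqref{item:piL} applies directly to the same $p$ and gives $\pi(p)\mid p-1$. Combining the two, $\pi_L(p)=\pi(p)\mid p-1$, which is the claim.

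There is essentially no obstacle. The only point needing care is the hypothesis of part~$(i)$, namely that the congruence condition modulo $10$ excludes divisibility by $5$; this is immediate. If one preferred not to rely on part~$(i)$, a fallback would be to express $L_i=F_{i-1}+F_{i+1}$. Periodicity of $(F_i)$ modulo $p$ with period dividing $p-1$ then forces $(L_i)$ modulo $p$ to have a period dividing $p-1$, and hence $\pi_L(p)\mid p-1$. This relies on the fact that any period is a multiple of the minimal period. Either route suffices, and I would present the first as the proof.
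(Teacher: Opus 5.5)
Your proof is correct and is exactly the paper's argument: the paper says Proposition~\ref{prop:piandpiL} directly implies the lemma, namely $\pi_L(p)=\pi(p)$ by part $(i)$ (since $p\equiv 4\pmod 5$) and $\pi(p)\mid p-1$ by part $(ii)$. Your fallback via $L_i=F_{i-1}+F_{i+1}$ is also valid and shows $\pi_L(d)\mid\pi(d)$ for every $d$, which would let you bypass part $(i)$ altogether.
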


The next lemma is an analogue of Lemma~\ref{lem:prescribedsZ}.

\begin{lemma}\label{lem:prescribedsL}
Let $(a+kd)_{k=0}^\infty$ be a given arithmetic progression. Then for every integer $m>s_L(a)$, there exists a term $N$ in the arithmetic progression $(a+kd)_{k=0}^\infty$ such that $s_L(N)=m$.
\end{lemma}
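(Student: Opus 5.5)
The plan is to start from the Lucas decomposition of $a$ and modify it one summand at a time. Each modification either keeps the number of summands the same or raises it by exactly one, and every modification preserves the residue modulo $d$. Since the Lucas sequence is periodic modulo $d$ with period $\pi_L(d)$ \cite{wall}, we have $L_{x+c\pi_L(d)}\equiv L_x\pmod d$ for all integers $x,c\geq0$. This congruence is the only arithmetic input.

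Write $\pi=\pi_L(d)$ and $a=\sum_{i=1}^{s}L_{r_i}$ with $s=s_L(a)$ and $r_1<\dots<r_s$. Let $k=m-s\geq1$.

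\textbf{Step 1 (lifting).} Replace the top summand $L_{r_s}$ by $L_{x_0}$, where $x_0=r_s+c\pi$ and $c$ is large enough that $x_0\geq r_s+4$. The resulting sum is still a valid Lucas decomposition: the lower summands are untouched, $x_0$ is far from $r_{s-1}$, and the constraint $\lambda_0+\lambda_2\leq1$ only involves indices $0$ and $2$, which we never create. So it has exactly $s$ summands, is congruent to $a$ modulo $d$, and is larger than $a$.

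\textbf{Step 2 (splitting).} Suppose the current top summand is $L_x$ with $x\geq r_{s-1}+4$, say. Replace it by
\[L_{x-1+M\pi}+L_{x-2+2M\pi},\]
where $M\geq1$ is chosen so that $M\pi\geq3$. Modulo $d$ this is congruent to $L_{x-1}+L_{x-2}=L_x$, so the residue is preserved.
\begin{itemize}
\item The two new indices differ by $M\pi-1\geq2$, so they are non-adjacent.
\item The smaller new index is $x-1+M\pi\geq x+2$, so it stays far above all the untouched lower summands.
\end{itemize}
Hence the new sum is again a Lucas decomposition, by uniqueness of the Lucas decomposition (Brown's theorem). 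It has exactly one more summand, and its new top index is even larger, so the hypothesis of this step persists. Repeating the step $k$ times gives an integer $N$ with $s_L(N)=s+k=m$ and $N\equiv a\pmod d$. Each step only increases the value, so $N>a$, and therefore $N=a+k'd$ for some $k'\geq0$, i.e.\ $N$ is a term of the progression.

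\textbf{Main obstacle: index bookkeeping.} The step needing care is checking that every intermediate sum really is a Lucas decomposition. This means:
\begin{itemize}
\item no two used indices are adjacent;
\item the pair $L_0,L_2$ never appears together;
\item every index remains nonnegative, so that the periodicity shift $x\mapsto x+M\pi$ is legitimate.
\end{itemize}
I would handle all three at once by always working strictly above $r_{s-1}+3$. Step 1 guarantees this from the start, the splitting step only produces indices larger than the one it removes, and the indices $x-1$ and $x-2$ used in the congruence are nonnegative because $x\geq4$. Small $d$ such as $d=1$, or $d$ divisible by $5$ where $\pi_L(d)\neq\pi(d)$, cause no trouble, since only the periodicity of $(L_i)$ modulo $d$ is used.
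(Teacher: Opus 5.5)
Your proof is correct and follows essentially the same route as the paper: the paper replaces the top summand $L_r$ by $L_{r-1+3\pi_L(d)}+L_{r-2+2\pi_L(d)}$ and iterates, using only $L_r=L_{r-1}+L_{r-2}$ and the periodicity of $(L_i)$ modulo $d$. Your preliminary lifting step is a small improvement, because forcing the top index to be at least $4$ avoids two edge cases the paper's write-up passes over in silence: the negative indices that appear when $a\in\{1,2\}$, and the forbidden pair $L_0+L_2$ that the paper's formula produces when $a=2$ and $d=1$.
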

\begin{proof}
It suffices to show that there exists a term $N$ in the arithmetic progression $(a+kd)_{k=0}^\infty$ such that $s_L(N)=s_L(a)+1$, since the original statement can then be proved inductively by replacing the value of $a$ with $N$ at each step.

Let
\[a=\sum_{i=1}^{s_L(a)}L_{r_i}\]
be the Lucas decomposition of $a$. To simply the notation, let $r=r_{s_L(a)}$. Define
\[N=L_{r-1+3\pi_L(d)}+L_{r-2+2\pi_L(d)}+\sum_{i=1}^{s_L(a)-1}L_{r_i}.\]
Note that the summation provided is the Lucas decomposition of $N$ since $(r-1+3\pi_L(d))-(r-2+2\pi_L(d))\geq2$ and $(r-2+2\pi_L(d))-r_{s_L(a)-1}\geq(r-2+2\pi_L(d))-(r-2)\geq2$, thus $s_L(N)=s_L(a)+1$. Furthermore,
\[N-a=L_{r-1+3\pi_L(d)}+L_{r-2+2\pi_L(d)}-L_r=(L_{r-1+3\pi_L(d)}-L_{r-1})+(L_{r-2+2\pi_L(d)}-L_{r-2}),\]
which is divisible by $d$. Therefore, $N$ is a term in the arithmetic progression $(a+kd)_{k=0}^\infty$.
\end{proof}

This leads to the main theorem of this section.

\begin{theorem}\label{thm:infinLN}
Every arithmetic progression contains infinitely many Lucas-Niven numbers.
\end{theorem}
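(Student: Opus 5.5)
The plan is to make the number of Lucas summands equal to a prime $p$ coprime to the common difference, and then choose the term so that it is itself divisible by $p$. Lemma~\ref{lem:prescribedsL} already works for an arbitrary common difference, so we do not need the Fibonacci-specific reduction of Lemma~\ref{lem:d=F2jsubseq}; the Chinese remainder theorem replaces it.

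Fix an arithmetic progression $(a+kd)_{k=0}^\infty$. As in the proof of Theorem~\ref{thm:infinZN}, it suffices to produce one Lucas-Niven term. Infinitely many then follow by repeating the argument with $a$ replaced by a later term $a+Kd$, since every Lucas-Niven term of the tail progression lies in the original progression beyond that point.

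\textbf{Step 1 (choose $p$ and a starting term).} Pick a prime $p$ with $p\nmid d$; it is convenient, though not essential, to take $p\equiv-1\pmod{10}$, so that Lemma~\ref{lem:piL} is available. Since $\gcd(p,d)=1$, the Chinese remainder theorem gives a term $a'=a+k_0d$ of the progression with $a'\equiv0\pmod p$. There are infinitely many primes to choose from, and $s_L(a')$ is bounded independently of $p$ once we fix $a'$ below a bound such as $a+pd$. More cleanly, first fix $p$ and $a'$, and if $p\le s_L(a')$ replace $p$ by a larger prime. For each candidate $p$ we choose $a'$ as the least term of the progression divisible by $p$, so $a'\le a+(p-1)d$. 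Hence $s_L(a')=O(\log(a+pd))$, which is less than $p$ for all sufficiently large $p$. So we can ensure $p>s_L(a')$.

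\textbf{Step 2 (prescribe the number of summands).} Apply Lemma~\ref{lem:prescribedsL} to the progression $(a'+k\,pd)_{k=0}^\infty$ with $m=p>s_L(a')$. This gives a term $N=a'+k_1pd$ with $s_L(N)=p$. Then $N\equiv a'\equiv0\pmod p$, so $s_L(N)=p$ divides $N$ and $N$ is Lucas-Niven. Moreover $N\equiv a'\equiv a\pmod d$ and $N\ge a$, so $N$ lies in the original progression.

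The only delicate point is Step 1: we must guarantee $p>s_L(a')$ while $a'$ depends on $p$. This is handled by the logarithmic bound $s_L(n)=O(\log n)$, which holds because Lucas numbers grow exponentially and the summands in a Lucas decomposition have distinct indices that are at least $2$ apart. If one prefers to avoid this estimate, an alternative is to fix $p$ first and note that Lemma~\ref{lem:prescribedsL} only requires $m>s_L(a')$. One can then take $a'$ to be any term of the progression divisible by $p$ with few Lucas summands, for instance by choosing $p$ larger than $s_L$ of each of the first $p$ terms, which is possible by the same growth bound.
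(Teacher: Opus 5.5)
Your proof is correct, and it takes a genuinely different route from the paper's.

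\textbf{The paper's route.} The paper first fixes $s_L(N)=p$ by applying Lemma~\ref{lem:prescribedsL} to the original progression. It then has to \emph{engineer} divisibility by $p$ afterwards. It shifts each index $r_i$ to $t_i=r_i+\pi_L(d)(m_i+i\pi_L(p))$, which preserves $N$ modulo $d$ while steering each $t_i$ modulo $\pi_L(p)$ so that $N'\equiv0\pmod p$. That steering requires $\gcd(\pi_L(p),\pi_L(d))\le2$. This is why the paper needs Dirichlet's theorem for primes $p\equiv-1\pmod{10\pi_L(d)}$, the fact $\pi_L(p)\mid p-1$, and a parity case split.

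\textbf{Your route.} You build divisibility by $p$ into the residue class from the start. By CRT you pass to the sub-progression $(a'+kpd)_{k=0}^\infty$ with $p\mid a'$, so every term is automatically divisible by $p$. Lemma~\ref{lem:prescribedsL}, which holds for an arbitrary common difference, then supplies a term with exactly $p$ summands.

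\textbf{What each approach buys.} Your only cost is the elementary estimate $s_L(n)\le 2+\log_\phi n$. You need it because $a'\le a+(p-1)d$ grows with $p$. The paper avoids any growth bound by choosing $p>s_L(a)$ for the fixed $a$. In exchange, your argument needs neither Dirichlet's theorem nor any Pisano-period facts. It is also more general:
\begin{itemize}
\item the condition $p\equiv-1\pmod{10}$ you mention is never used, so you can drop it;
\item $p$ need not be prime, since any sufficiently large $m$ coprime to $d$ works equally well;
\item the same trick shortens the proof of Theorem~\ref{thm:infinZN}: apply Lemma~\ref{lem:d=F2jsubseq} to the pair $(a',pd)$, then Lemma~\ref{lem:prescribedsZ} with $m=p>s_Z(a')$.
\end{itemize}

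\textbf{Expository fixes.} Your sentence claiming $s_L(a')$ is ``bounded independently of $p$'' is misleading, since the bound $a+pd$ depends on $p$. The ``more cleanly'' version that follows it is the argument you actually need, so you should keep only that. The closing ``alternative'' paragraph adds nothing and can be deleted.
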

\begin{proof}
With the same logic presented in the proof of Theorem~\ref{thm:infinZN}, it suffices to show that for every $a,d\in\N$, there exists a Lucas-Niven number in the arithmetic progression $(a+kd)_{k=0}^\infty$. By Dirichlet's Theorem, there are infinitely many primes $p$ such that $p\equiv-1\pmod{10\pi_L(d)}$. Let $p>s_L(a)$ be such a prime. Note that $\gcd(p-1,\pi_L(d))\leq2$, which implies that $\gcd(\pi_L(p),\pi_L(d))\leq2$ by Lemma~\ref{lem:piL}. Since $p>s_L(a)$, there exists a term $N$ in the arithmetic progression $(a+kd)_{k=0}^\infty$ such that $s_L(N)=p$ by Lemma~\ref{lem:prescribedsL}. Let
\[N=\sum_{i=1}^pL_{r_i}\]
be the Lucas decomposition of $N$. We are going to complete this proof by considering the following two cases.\\

\noindent\textit{Case $1$}: $\gcd(\pi_L(p),\pi_L(d))=1$.\\

For each $1\leq i\leq p$, there exists an integer $0\leq m_i<\pi_L(p)$ such that
\[m_i\equiv(-r_i+1)\pi_L(d)^{-1}\pmod{\pi_L(p)},\]
i.e., $\pi_L(p)\mid r_i-1+\pi_L(d)m_i$. Let $t_i=r_i+\pi_L(d)(m_i+i\pi_L(p))$ and define
\[N'=\sum_{i=1}^pL_{t_i}.\]
Note that $\pi_L(p)\geq2$, so the summation provided is the Lucas decomposition of $N'$ since $t_1\geq2$ and $t_{i+1}-t_i=r_{i+1}-r_i+\pi_L(d)\pi_L(p)\geq2$ for all $1\leq i\leq p-1$. Consequently, $s_L(N')=p$. Moreover, $t_i\equiv 1\pmod{\pi_L(p)}$ for all $1\leq i\leq p$, so $N'\equiv\sum_{i=1}^pL_1\equiv0\pmod{p}$, implying that $N'$ is Lucas-Niven. Furthermore, since $\pi_L(d)\mid t_i-r_i$ for all $1\leq i\leq p$, we have $d\mid N'-N$. Therefore, $N'$ is a term in the arithmetic progression $(a+kd)_{k=0}^\infty$.\\

\noindent\textit{Case $2$}: $\gcd(\pi_L(p),\pi_L(d))=2$.\\

Let $E=\{1\leq i\leq p:r_i\text{ is even}\}$ and $O=\{1\leq i\leq p:r_i\text{ is odd}\}$. We consider the following subcases.\\

\noindent\textit{Case $2.1$}: $|E|\geq|O|$.\\

Partition $E$ into $E_1$ and $E_2$ such that $|E_1|=|E|-|O|$ and $|E_2|=|O|$. For each $1\leq i\leq p$, there exists an integer $0\leq m_i<\pi_L(p)/2$ such that
\begin{itemize}
\item $m_i\equiv-(r_i/2)(\pi_L(d)/2)^{-1}\pmod{\pi_L(p)/2}$ if $i\in E_1$, i.e., $\pi_L(p)\mid r_i+\pi_L(d)m_i$;
\item $m_i\equiv-((r_i-2)/2)(\pi_L(d)/2)^{-1}\pmod{\pi_L(p)/2}$ if $i\in E_2$, i.e., $\pi_L(p)\mid r_i-2+\pi_L(d)m_i$; and
\item $m_i\equiv-((r_i-1)/2)(\pi_L(d)/2)^{-1}\pmod{\pi_L(p)/2}$ if $i\in O$, i.e., $\pi_L(p)\mid r_i-1+\pi_L(d)m_i$.
\end{itemize}
Similar to Case $1$, let $t_i=r_i+\pi_L(d)(m_i+i\pi_L(p))$ and define $N'=\sum_{i=1}^pL_{t_i}$. With the same reasoning as in Case $1$, we have $s_L(N')=p$, and we also know that $N'$ is a term in the arithmetic progression $(a+kd)_{k=0}^\infty$. Moreover, $t_i\equiv0\pmod{\pi_L(p)}$ if $i\in E_1$, $t_i\equiv2\pmod{\pi_L(p)}$ if $i\in E_2$, and $t_i\equiv1\pmod{\pi_L(p)}$ if $i\in O$, so
\[N'\equiv\sum_{i\in E_1}L_0+\sum_{i\in E_2}L_2+\sum_{i\in O}L_1\equiv2(|E|-|O|)+3|O|+|O|\equiv2p\equiv0\pmod{p},\]
implying that $N'$ is Lucas-Niven.\\

\noindent\textit{Case $2.2$}: $|E|<|O|$.\\

Partition $O$ into $O_1$ and $O_2$ such that $|O_1|=|O|-|E|$ and $|O_2|=|E|$. For each $1\leq i\leq p$, there exists an integer $0\leq m_i<\pi_L(p)/2$ such that
\begin{itemize}
\item $m_i\equiv-((r_i-2)/2)(\pi_L(d)/2)^{-1}\pmod{\pi_L(p)/2}$ if $i\in E$, i.e., $\pi_L(p)\mid r_i-2+\pi_L(d)m_i$;
\item $m_i\equiv-((r_i-1)/2)(\pi_L(d)/2)^{-1}\pmod{\pi_L(p)/2}$ if $i\in O_1$, i.e., $\pi_L(p)\mid r_i-1+\pi_L(d)m_i$; and
\item $m_i\equiv-((r_i+1)/2)(\pi_L(d)/2)^{-1}\pmod{\pi_L(p)/2}$ if $i\in O_2$, i.e., $\pi_L(p)\mid r_i+1+\pi_L(d)m_i$.
\end{itemize}
Once again, letting $t_i=r_i+\pi_L(d)(m_i+i\pi_L(p))$ and defining $N'=\sum_{i=1}^pL_{t_i}$ give us $s_L(N')=p$ and that $N'$ is a term in the arithmetic progression $(a+kd)_{k=0}^\infty$. Finally, $t_i\equiv2\pmod{\pi_L(p)}$ if $i\in E$, $t_i\equiv1\pmod{\pi_L(p)}$ if $i\in O_1$, and $t_i\equiv-1\pmod{\pi_L(p)}$ if $i\in O_2$, so by defining $L_{-1}=-1$, we have
\[N'\equiv\sum_{i\in E}L_2+\sum_{i\in O_1}L_1+\sum_{i\in O_2}L_{-1}\equiv3|E|+(|O|-|E|)+(-1)|E|\equiv p\equiv0\pmod{p},\]
implying that $N'$ is Lucas-Niven.
\end{proof}

\section{Maximum number of consecutive Zeckendorf-Niven terms in a $2$-AP}\label{sec:2AP}

Motivated by Grundman's proof \cite{g2007} on the maximum number of consecutive Zeckendorf-Niven terms in a $1$-AP, we define $z_6(n)=\sum_{i=2}^6\zeta_i(n)F_i$ for every $n\in\N$. In the following two lemmas, we demonstrate some interesting relation between the values of $z_6(n)$ and Zeckendorf-Niven numbers.

\begin{lemma}\label{lem:z6=4or9}
For every integer $n\geq10$, if $z_6(n)=7$ and $\zeta_7(n)=0$, $z_6(n)=4$, or $z_6(n)=9$, then $n$ and $n+2$ cannot be both Zeckendorf-Niven.
\end{lemma}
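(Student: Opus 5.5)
The plan is to show that in each of the three cases the Zeckendorf decompositions of $n$ and $n+2$ differ only in the summands with indices $2$ through $6$, and that the number of such summands is the same for both. Then $s_Z(n+2)=s_Z(n)$. If both numbers were Zeckendorf-Niven, this common value $s$ would divide both $n$ and $n+2$, hence divide $2$. I then rule this out by showing $s\geq3$.

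\textbf{Step 1 (the low part).} Write $n=z_6(n)+H$, where $H=\sum_{i\geq7}\zeta_i(n)F_i$. In each case I replace $z_6(n)$ by a legal Zeckendorf representation of $z_6(n)+2$ using only $F_2,\dots,F_6$:
\begin{itemize}
\item $z_6(n)=4=F_4+F_2$ becomes $6=F_5+F_2$;
\item $z_6(n)=7=F_5+F_3$ becomes $9=F_6+F_2$;
\item $z_6(n)=9=F_6+F_2$ becomes $11=F_6+F_4$.
\end{itemize}
Each new value is at most $12<F_7$, and each new representation has exactly two summands, as does the old one.

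\textbf{Step 2 (adjacency with $F_7$).} This is the only delicate point, namely that $H$ together with the new low part is still a valid Zeckendorf decomposition, so that $s_Z(n+2)=s_Z(n)$ follows from the uniqueness of the Zeckendorf decomposition.
\begin{itemize}
\item In the first case the new top index is $5$, which is not adjacent to $7$.
\item In the second case the new top index is $6$, so I need $\zeta_7(n)=0$; this is exactly the extra hypothesis in the statement.
\item In the third case the old decomposition already contains $F_6$, so $\zeta_7(n)=0$ automatically, and the new top index is again $6$.
\end{itemize}
Hence in all three cases $s_Z(n+2)=s_Z(n)$.

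\textbf{Step 3 (lower bound on $s$).} Since $n\geq10$ and $z_6(n)\leq9$, we have $H>0$. Moreover $H$ uses only indices at least $7$ (in fact at least $8$ when $\zeta_7(n)=0$). Therefore $s=s_Z(n)\geq 2+1=3$. But $s\mid (n+2)-n=2$, which is a contradiction. So $n$ and $n+2$ cannot both be Zeckendorf-Niven.

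The main obstacle is the carry/adjacency check in Step 2, which is precisely why the hypothesis $\zeta_7(n)=0$ is needed in the case $z_6(n)=7$. Everything else is bookkeeping.
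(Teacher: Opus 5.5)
Your proof is correct and follows essentially the same route as the paper. In each case you replace the low part by a two-summand representation of $z_6(n)+2$ that stays compatible with the higher summands, conclude $s_Z(n+2)=s_Z(n)\geq 3$, and note that this common value cannot divide $2$; your explicit adjacency check against $F_7$ simply spells out what the paper states more tersely.
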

\begin{proof}
If $z_6(n)=7=F_3+F_5$ and $\zeta_7(n)=0$, then $z_6(n+2)=9=F_2+F_6$; if $z_6(n)=4=F_2+F_4$, then $\zeta_6(n)=0$, so $z_6(n+2)=6=F_2+F_5$; if $z_6(n)=9=F_2+F_6$, then $\zeta_7(n)=0$, so $z_6(n+2)=11=F_4+F_6$. Hence, $s_Z(n)=s_Z(n+2)$ in all cases. Since $n\geq10$ and $z_6(n)\in\{7,4,9\}$, we have $s_Z(n)\geq3$. As a result, $s_Z(n)$ does not divide $(n+2)-n=2$, so $n$ and $n+2$ cannot be both Zeckendorf-Niven.
\end{proof}

\begin{lemma}\label{lem:z6=1or3}
For every integer $n\geq4$, if $z_6(n)=1$ or $z_6(n)=3$ and both $n$ and $n+2$ are Zeckendorf-Niven, then $s_Z(n)=2$.
\end{lemma}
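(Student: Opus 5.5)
The plan is to follow the proof of Lemma~\ref{lem:z6=4or9}: show that adding $2$ to $n$ changes only the low-order part of the Zeckendorf decomposition and leaves $s_Z$ unchanged. Then $s_Z(n)\mid (n+2)-n=2$, and a lower bound $s_Z(n)\geq2$ coming from $n\geq4$ forces $s_Z(n)=2$.

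\emph{Case $z_6(n)=1$.} Here $z_6(n)=F_2$, so $\zeta_2(n)=1$ and $\zeta_3(n)=\zeta_4(n)=\zeta_5(n)=\zeta_6(n)=0$. Replacing the summand $F_2=1$ by $F_4=3$ gives $n+2$. Since $\zeta_3(n)=\zeta_5(n)=0$, the new summand $F_4$ has no neighbours in the sequence, so the result is still a valid Zeckendorf decomposition. By uniqueness of the Zeckendorf decomposition, $z_6(n+2)=3$ and all higher digits of $n+2$ agree with those of $n$. Hence $s_Z(n+2)=s_Z(n)$.

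\emph{Case $z_6(n)=3$.} Here $z_6(n)=F_4$, so $\zeta_4(n)=1$ and $\zeta_2(n)=\zeta_3(n)=\zeta_5(n)=\zeta_6(n)=0$. Replacing $F_4=3$ by $F_5=5$ gives $n+2$. This is still valid because $\zeta_6(n)=0$; the possibility $\zeta_7(n)=1$ causes no conflict, since $F_5$ and $F_7$ are not consecutive. Again $z_6(n+2)=5$ and the higher digits are unchanged, so $s_Z(n+2)=s_Z(n)$.

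\emph{Conclusion.} In both cases, since $n$ and $n+2$ are both Zeckendorf-Niven and share the value $s:=s_Z(n)=s_Z(n+2)$, we get $s\mid n$ and $s\mid n+2$, so $s\mid 2$ and $s\leq2$. For the lower bound, $s_Z(n)=1$ would make $n$ a Fibonacci number equal to $z_6(n)\in\{1,3\}$, which contradicts $n\geq4$. So $n$ has a nonzero digit beyond index $6$ in addition to its low part, and $s_Z(n)\geq2$. Therefore $s_Z(n)=2$.

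There is no real obstacle here. The only point needing care is checking that the modified digit string still satisfies the non-adjacency condition; the vanishing of the neighbouring digits $\zeta_3,\zeta_5$ (respectively $\zeta_6$) is exactly what the hypothesis on $z_6(n)$ guarantees.
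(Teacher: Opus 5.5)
Your proof is correct and follows essentially the same route as the paper: you show $s_Z(n+2)=s_Z(n)$ via the explicit replacement $F_2\to F_4$ (respectively $F_4\to F_5$), deduce $s_Z(n)\mid 2$, and use $n\geq4$ to get $s_Z(n)\geq2$. Your write-up is slightly more careful than the paper's, since it checks explicitly that the new digit strings satisfy the non-adjacency condition and explains why $s_Z(n)=1$ is impossible.
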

\begin{proof}
If $z_6(n)=1=F_2$, then $z_6(n+2)=3=F_4$ since $\zeta_5(n)=\zeta_5(n+2)=0$; if $z_6(n)=3=F_4$, then $z_6(n)=5=F_5$ since $\zeta_6(n)=\zeta_6(n+2)=0$. Hence, $s_Z(n)=s_Z(n+2)$. If both $n$ and $n+2$ are Zeckendorf-Niven, then $s_Z(n)$ divides $(n+2)-n=2$. Since $n\geq4$ and $z_6(n)=\{1,3\}$, we have $s_Z(n)\geq2$. Therefore, $s_Z(n)=2$.
\end{proof}

The following two theorems establish an upper bound and a lower bound for the maximum number of consecutive Zeckendorf-Niven terms in a $2$-AP.

\begin{theorem}\label{thm:upperbd2AP}
The only sequences of eight or more consecutive Zeckendorf-Niven terms in a $2$-AP are subsequences of $2,4,6,8,10,12,14,16,18$.
\end{theorem}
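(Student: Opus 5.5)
Throughout, let $n,n+2,\dots,n+14$ be eight consecutive Zeckendorf-Niven terms of a $2$-AP. The plan is to track how $z_6$ changes along the progression and use Lemmas~\ref{lem:z6=4or9} and \ref{lem:z6=1or3} to show that no such run exists unless every term is small; the finitely many small cases are then checked by hand.

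The first step is to record the possible values of $z_6(m)$. They are the Zeckendorf-admissible sums of $F_2,\dots,F_6=1,2,3,5,8$, namely $0,1,\dots,12$. Adding $2$ to $m$ changes $z_6$ in a controlled way. If $z_6(m)\le 10$, then $z_6(m+2)=z_6(m)+2$, with one exception: a carry into $F_7$ can occur, and then $z_6$ drops. The exceptional cases are $z_6=11$ or $12$, which go to $0$ or $1$ with a carry, and $z_6=7$ with $\zeta_7=1$, where the carry propagates.

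Among eight consecutive terms, the values $z_6(n+2k)$ for $k=0,\dots,7$ move in steps of $+2$ modulo the occasional reset. I will do a case analysis on $z_6(n)$, which is $0$ to $12$. The goal in each case is to exhibit some $k\le 6$ where $n+2k\ge 10$ and one of the following holds:
\begin{itemize}
\item $z_6(n+2k)\in\{4,9\}$;
\item $z_6(n+2k)=7$ and $\zeta_7(n+2k)=0$.
\end{itemize}
Lemma~\ref{lem:z6=4or9} then forbids both $n+2k$ and $n+2k+2$ from being Zeckendorf-Niven, a contradiction. For example, an even-parity chain $0,2,4$ hits $4$ quickly, and the odd chain $1,3,5,7,9$ hits $9$. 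The delicate case is $7$ with $\zeta_7=1$, where $n+2$ triggers carries. There I will use the resets: after a carry the chain restarts at $0$ or $1$, and within three more steps it reaches $4$ or $9$ (or $7$ with $\zeta_7=0$, since a fresh carry has just cleared $\zeta_7$). The length eight is what guarantees that the chain, even after one reset, reaches a forbidden value with a following term still in the run.

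Where the chain passes through $1$ or $3$, I will use Lemma~\ref{lem:z6=1or3} as a backup. It forces $s_Z=2$, and then parity-type constraints follow: $n$ must be even and a sum of two Fibonacci numbers, one of them $F_2$ or $F_4$. I combine this with $s_Z$ of the neighbouring terms to derive a contradiction, or else to force $n$ to be small.

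The expected main obstacle is the carry bookkeeping when $z_6=7$ with $\zeta_7=1$, or when $z_6\in\{11,12\}$. In these cases the change in $s_Z$ depends on higher digits, namely whether $\zeta_8$ and $\zeta_9$ equal $1$, and the carry can cascade. I would handle this by noting that after any carry the next several terms have low digits reset, so their $z_6$ values are small and deterministic; only the first term's chain needs care. Finally, the hypothesis $n\ge 10$ in Lemma~\ref{lem:z6=4or9} leaves the runs that start below $10$. I verify these by direct computation of $s_Z$ for small even and odd numbers: the even run $2,4,\dots,18$ consists of Zeckendorf-Niven terms, while $20$ and the odd runs fail within eight terms. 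This yields exactly the subsequences of $2,4,6,8,10,12,14,16,18$.
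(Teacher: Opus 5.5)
There is a genuine gap. Your central claim, that along eight terms the chain $z_6(n+2k)$ (even after one reset) reaches $4$, $9$, or $7$ with $\zeta_7=0$ at some $k\le 6$, is false. In the case where it fails, Lemma~\ref{lem:z6=1or3} does not rescue you in the way you describe.

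Take $z_6(n)=6=F_2+F_5$ with $\zeta_7(n)=0$. Then $z_6(n+2k)$ for $k=0,\dots,7$ is $6,8,10,12,1,3,5,7$. The only candidate for Lemma~\ref{lem:z6=4or9} is at $k=7$, where no following term exists. Applying Lemma~\ref{lem:z6=1or3} at $n+8$ gives $s_Z(n+8)=2$, but this does not make $n$ small. It only says $n=F_r-7=F_2+F_5+(F_8+F_{10}+\cdots+F_{r-1})$ for some odd $r\ge 7$. That is an infinite family, and parity does not eliminate it: parity only removes $3\mid r$.

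The missing idea, which is how the paper finishes its last case, is this. Passing from $n$ to $n+4$ replaces $F_2+F_5$ by $F_3+F_6$ without touching higher digits. So $s_Z(n)=s_Z(n+4)$, and hence $s_Z(n)\mid 4$. Since $s_Z(n)\ge 3$, you get $s_Z(n)=4$. This forces $n=F_2+F_5+F_8+F_{10}=82$, which is not Zeckendorf-Niven.

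Your carry bookkeeping is also inaccurate in ways that matter.
\begin{enumerate}[$(i)$]
\item You omit the carry from $z_6=6$ with $\zeta_7=1$, which goes to $0$.
\item A carry out of $z_6\in\{11,12\}$ does not clear $\zeta_7$. It adds $F_7$ to the higher part, so $\zeta_7$ becomes $1$ unless $\zeta_8$ was $1$.
\end{enumerate}
Because of this, several more chains avoid Lemma~\ref{lem:z6=4or9} for all $k\le 6$: those starting from $z_6(n)\in\{8,10,12\}$ when the carry leaves $\zeta_7=1$, and the one starting from $z_6(n)=1$ with $\zeta_7(n)=1$. These are handled by Lemma~\ref{lem:z6=1or3}, since $s_Z=2$ together with $\zeta_7=1$ forces the relevant term to be $14$ or $16$. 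You need to say so explicitly; your claim that ``a fresh carry has just cleared $\zeta_7$'' would hide this.

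The paper organizes all of this cleanly. It first reduces to $n\ge 21$, because any other eight-term run starting at most $20$ contains $7$, $19$, or $20$. Then $z_6(n+2k)=3$ with $s_Z(n+2k)=2$ forces $\zeta_7(n+2k)=0$, and so $z_6(n+2k+4)=7$ with $\zeta_7=0$. It then rules out the values of $z_6(n)$ by propagating backwards from the forbidden ones. This leaves $z_6(n)=6$ as the only case needing the extra divisibility argument above.

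Your overall framework, tracking $z_6$ and combining the two lemmas, is the right one. As written, however, it does not cover these cases, and in particular it misses the $s_Z(n)=s_Z(n+4)$ step.
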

\begin{proof}
It is easy to verify that $2,4,6,8,10,12,14,16,18$ are Zeckendorf-Niven. Now, suppose that $(n+2k)_{k=0}^7$ is not a subsequence of $2,4,6,8,10,12,14,16,18$ and every term is Zeckendorf-Niven. Since $7,19,20$ are not Zeckendorf-Niven, we have $n\geq21$.

By Lemma~\ref{lem:z6=4or9}, $z_6(n+2k)\notin\{4,9\}$ for any integer $0\leq k\leq6$. We also have $z_6(n+2k)\neq2$ for any integer $0\leq k\leq5$; otherwise, $z_6(n+2k+2)=4$ since $\zeta_5(n+2k)=\zeta_5(n+2k+2)=0$. Similarly, we have $z_6(n+2k)\neq0$ for any integer $0\leq k\leq4$; otherwise, $z_6(n+2k+2)=2$ since $\zeta_4(n+2k)=\zeta_4(n+2k+2)=0$. If $z_6(n)=11=F_4+F_6$, then the Zeckendorf decomposition of $n+2$ is
\[F_j+\sum_{i=j+2}^\infty\zeta_i(n)F_i,\]
where $j\geq7$ is the smallest integer such that $\zeta_j(n)=\zeta_{j+1}(n)=0$. This leads to a contradiction since $z_6(n+2)=0$, so we have $z_6(n)\neq11$.

By Lemma~\ref{lem:z6=4or9} again, $z_6(n+2k)\neq7$ for any integer $0\leq k\leq 6$ if $\zeta_7(n+2k)=0$. If $z_6(n+2k)=3$ for some integer $0\leq k\leq4$, then $s_Z(n+2k)=2$ by Lemma~\ref{lem:z6=1or3}. As a result, we have $\zeta_7(n+2k)=\zeta_7(n+2k+4)=0$ since $n\geq21$ and $z_6(n+2k+4)=7$, which is a contradiction. Hence, $z_6(n+2k)\neq3$ for any integer $0\leq k\leq4$. From this, we deduce that $z_6(n+2k)\neq1$ for any integer $0\leq k\leq3$; otherwise, $z_6(n+2k+2)=3$ since $\zeta_5(n+2k)=\zeta_5(n+2k+2)=0$.

Next, if $z_6(n+2k)=12=F_2+F_4+F_6$ for some integer $0\leq k\leq2$, then the Zeckendorf decomposition of $n+2k+2$ is
\[F_2+F_j+\sum_{i=j+2}^\infty\zeta_i(n+2k)F_i,\]
where $j\geq7$ is the smallest integer such that $\zeta_j(n+2k)=\zeta_{j+1}(n+2k)=0$. This leads to a contradiction since $z_6(n+2k+2)=1$, so we have $z_6(n+2k)\neq12$ for any integer $0\leq k\leq2$. Consequently, $z_6(n+2k)\neq10$ for any integer $0\leq k\leq1$; otherwise, $z_6(n+2k+2)=12$ since $\zeta_7(n+2k)=\zeta_7(n+2k+2)=0$. Furthermore, $z_6(n)\neq8$; otherwise, $z_6(n+2)=10$ since $\zeta_7(n)=\zeta_7(n+2)=0$.

If $z_6(n+2k)=7$ for some integer $0\leq k\leq2$ and $\zeta_7(n+2k)=1$, then the Zeckendorf decomposition of $n+2k+2$ is
\[F_2+F_j+\sum_{i=j+2}^\infty\zeta_i(n+2k)F_i,\]
where $j\geq8$ is the smallest integer such that $\zeta_j(n+2k)=\zeta_{j+1}(n+2k)=0$. This again leads to a contradiction since $z_6(n+2k+2)=1$, so we have $z_6(n+2k)\neq7$ for any integer $0\leq k\leq2$. Then we also have $z_6(n)\neq5$; otherwise, $z_6(n+2)=7$ since $\zeta_6(n)=\zeta_6(n+2)=0$.

The only remaining case is $z_6(n)=6=F_2+F_5$. If $\zeta_7(n)=1$, then the Zeckendorf decomposition of $n+2$ is
\[F_j+\sum_{i=j+2}^\infty\zeta_i(n)F_i,\]
where $j\geq8$ is the smallest integer such that $\zeta_j(n)=\zeta_{j+1}(n)=0$. This leads to a contradiction since $z_6(n+2)=0$. If $\zeta_7(n)=0$, then $z_6(n+4)=10=F_3+F_6$, so $s_Z(n)=s_Z(n+4)$. Since $n\geq21$ and $z_6(n)=6$, we have $s_Z(n)\geq3$, thus $s_Z(n)=4$. On the other hand, the Zeckendorf decomposition of $n+8$ is
\[F_2+F_j+\sum_{i=j+2}^\infty\zeta_i(n)F_i,\]
where $j\geq7$ is the smallest integer such that $\zeta_j(n)=\zeta_{j+1}(n)=0$. Hence, $z_6(n+8)=1$, which implies that $s_Z(n+8)=2$ by Lemma~\ref{lem:z6=1or3}. The only possibility is $n=F_2+F_5+F_8+F_{10}=82$. However, $s_Z(n)=4\nmid n$, contradicting that $n$ is Zeckendorf-Niven.
\end{proof}

\begin{theorem}
For every $j\in\N$, let $n=27+F_{120j+17}$. Then $n,n+2,n+4,n+6,n+8$ is a sequence of five consecutive Zeckendorf-Niven terms in a $2$-AP.
\end{theorem}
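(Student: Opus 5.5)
The plan is a direct computation: find the Zeckendorf decompositions of the five terms, get their $s_Z$-values, and then check the required divisibilities using Pisano periods. The small part $27,29,31,33,35$ and the large summand $F_{120j+17}$ are far apart in index, so the decomposition of each term splits cleanly.

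\textbf{Step 1: the small part.} With $F_2=1,F_3=2,\dots,F_9=34$, we have
\[27=F_8+F_5+F_2,\quad 29=F_8+F_6,\quad 31=F_8+F_6+F_3,\quad 33=F_8+F_6+F_4+F_2,\quad 35=F_9+F_2.\]
Each largest index is at most $9$. Since $120j+17\geq 137$, appending $F_{120j+17}$ to each decomposition leaves no consecutive indices. By uniqueness of the Zeckendorf decomposition, this appended sum is the decomposition of $n+2k$ for $0\le k\le4$. Therefore
\[s_Z(n)=4,\quad s_Z(n+2)=3,\quad s_Z(n+4)=4,\quad s_Z(n+6)=5,\quad s_Z(n+8)=3.\]

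\textbf{Step 2: the divisibility requirements.} We need:
\begin{itemize}
\item $4\mid n$ (this also gives $4\mid n+4$);
\item $3\mid n+2$ (this also gives $3\mid n+8$, since $n+8\equiv n+2\pmod 3$);
\item $5\mid n+6$.
\end{itemize}
The constraints are mutually consistent. Since $27\equiv 3\pmod 4$, $27\equiv 0\pmod 3$ and $27\equiv 2\pmod 5$, they are equivalent to
\[F_{120j+17}\equiv 1\pmod 4,\qquad F_{120j+17}\equiv 1\pmod 3,\qquad F_{120j+17}\equiv 2\pmod 5.\]

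\textbf{Step 3: periodicity.} By Proposition~\ref{prop:Fiperiodic}, the Fibonacci sequence is periodic modulo $4$, $3$ and $5$. The Pisano periods are $\pi(4)=6$, $\pi(3)=8$ and $\pi(5)=20$; these small values are easily checked directly or via Proposition~\ref{prop:Pisano}, since $3=F_4$ and $5=F_5$. Their least common multiple is $120$. Hence $F_{120j+17}\equiv F_{17}\pmod{60}$, so it suffices to verify the three congruences for $F_{17}=1597$. Indeed $1597=1596+1$ with $1596$ divisible by $12$, and $1597\equiv 2\pmod 5$.

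The main (mild) obstacle is Step 1: getting the small decompositions right, for instance $33=F_8+F_6+F_4+F_2$ and not $F_8+F_6+F_4$. The resulting $s_Z$-values are what make the required residues of $n$ modulo $4$, $3$ and $5$ consistent. The rest is routine.
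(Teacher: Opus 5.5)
Your proposal is correct and follows essentially the same route as the paper. You append $F_{120j+17}$ to the Zeckendorf decompositions of $27,\dots,35$, read off the $s_Z$-values $4,3,4,5,3$, and use $\lcm(\pi(3),\pi(4),\pi(5))=120$ to reduce to $F_{17}=1597$ modulo $60$. The only cosmetic difference is that you collapse the five divisibility checks into three congruences on $F_{120j+17}$, whereas the paper checks each term directly against $F_{17}\equiv 37\pmod{60}$.
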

\begin{proof}
The Zeckendorf decomposition of $n$ is $F_2+F_5+F_8+F_{120j+17}$, and it is easy to verify that $s_Z(n)=s_Z(n+4)=4$, $s_Z(n+2)=s_Z(n+8)=3$, and $s_Z(n+6)=5$. Note that $\pi(3)=8$, $\pi(4)=6$, and $\pi(5)=20$, so $\lcm\{\pi(3),\pi(4),\pi(5)\}=120$, implying that $F_{120j+17}\equiv F_{17}=1597\equiv37\pmod{3\cdot4\cdot5}$. The result follows since $4\mid 27+37$, $3\mid 29+37$, $4\mid31+37$, $5\mid33+37$, and $3\mid35+37$.
\end{proof}

\section{Consecutive Zeckendorf-Niven terms in an $F_d$-AP with the same $s_Z$-value}\label{sec:Fd}

Other than $1$-APs and $2$-APs, the study of consecutive Zeckendorf-Niven numbers in a general $d$-AP is difficult. Here, we choose a special case to consider, namely when the common difference is a Fibonacci number, denoted by $F_d$. We further restrict our attention to the scenario when the consecutive terms share the same $s_Z$-value.

Generalizing the notation from Section~\ref{sec:2AP}, we define $z_{x,y}(n)=\sum_{i=x}^y\zeta_i(n)F_i$ for all $n,x,y\in\N$ (if $i<2$, then we define $\zeta_i(n)=0$ for all $n\in\N$.) Now, we are ready to present the two main results in this section.

\begin{theorem}\label{thm:samesZ}
For every integer $d\geq2$, there exists infinitely many $a\in\N$ such that $s_Z(a)=s_Z(a+F_d)=s_Z(a+2F_d)$. Furthermore, there does not exist $a\in\N$ such that $s_Z(a)=s_Z(a+F_d)=s_Z(a+2F_d)=s_Z(a+3F_d)$.
\end{theorem}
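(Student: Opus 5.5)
For the existence part I would give an explicit family. For $d\geq3$, take $a=F_{d-1}+F_M$ with $M\geq d+3$. Then
\[a+F_d=F_{d+1}+F_M \quad\text{and}\quad a+2F_d=F_{d+1}+F_d+F_M=F_{d+2}+F_M.\]
All three sums are Zeckendorf decompositions because $M-(d+2)\geq1$ is not enough; so I would take $M\geq d+4$ to keep the indices non-adjacent. Hence $s_Z=2$ for all three terms. For $d=2$ we have $F_d=1$; take $a=1+F_M$ with $M\geq6$. This gives the three terms $F_2+F_M$, $F_3+F_M$ and $F_4+F_M$, each with $s_Z=2$. Letting $M$ vary gives infinitely many $a$ in both cases.

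For the nonexistence part, the main tool is a local analysis of what adding $F_d$ does to the Zeckendorf decomposition of $n$, controlled by the window $z_{d-2,d+2}(n)$.

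The first step is to record that if $\zeta_{d-1}(n)=\zeta_d(n)=\zeta_{d+1}(n)=0$, then $n+F_d$ has decomposition $n$'s decomposition plus $F_d$. In that case $s_Z(n+F_d)=s_Z(n)+1$. So each of $a$, $a+F_d$ and $a+2F_d$ must have a $1$ in one of positions $d-1,d,d+1$.

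Next I would list the possible transitions:
\begin{enumerate}[$(i)$]
\item If $\zeta_{d-1}(n)=1$, then $F_{d-1}+F_d=F_{d+1}$, and this carries upward through the standard chain $F_{d+1}+F_{d+2}=F_{d+3}$ and so on. Call this case (i).
\item If $\zeta_{d+1}(n)=1$, then $F_d+F_{d+1}=F_{d+2}$, which again carries upward. Call this case (ii).
\item If $\zeta_d(n)=1$, then $2F_d=F_{d+1}+F_{d-2}$. The new $F_{d-2}$ may collide with $\zeta_{d-3}$ or $\zeta_{d-4}$, and the resulting carries can propagate downward as well as upward. Call this case (iii).
\end{enumerate}
In each case I would compute the change in $s_Z$. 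An upward carry absorbing a maximal block of alternating ones merges summands and never increases the count. This should give the criterion for $s_Z(n+F_d)=s_Z(n)$ in terms of the pattern near index $d$, together with the new pattern in positions $d-2,\dots,d+2$ of $n+F_d$.

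Finally I would chain these transitions three times. The goal is to show that after two equal-$s_Z$ steps the term $a+2F_d$ necessarily has zeros in positions $d-1,d,d+1$, or else falls into a pattern that forces $s_Z$ to change. The family above illustrates the mechanism: $F_{d-1}\to F_{d+1}\to F_{d+2}$ leaves $d-1,d,d+1$ empty, so a third addition raises $s_Z$. Roughly, each equal-$s_Z$ step pushes the active summand upward past index $d+1$. The small cases $d=2,3$, where $F_{d-2}$ degenerates, would be handled separately, either by direct check or via Grundman's $1$-AP arguments.

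I expect the main obstacle to be case (iii), $\zeta_d(n)=1$. There the identity $2F_d=F_{d+1}+F_{d-2}$ triggers a downward cascade, for example $F_{d-2}+F_{d-4}$ leading to further rewrites. I would try first to show that whenever $\zeta_d(n)=1$ and $s_Z(n+F_d)=s_Z(n)$, the term $n+F_d$ has $\zeta_{d+1}=1$ or a fixed small set of patterns. Then the remaining two steps reduce to cases (i) and (ii) already analysed.
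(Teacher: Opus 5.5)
\textbf{Existence part.} This is correct. It is a special case of the paper's family, which takes any $a$ with $z_{d-2,d+3}(a)=F_{d-1}$. Your first observation for nonexistence, that positions $d-1,d,d+1$ cannot all be $0$, is also the paper's starting point.

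\textbf{Nonexistence part.} Beyond that first observation, this half is an outline rather than a proof, and the gap is genuine. None of the transitions is actually computed, and the case $\zeta_d(a)=1$, which you correctly flag as the crux, is left as something you ``would try'' to show. The proposed way of closing it also fails as stated, for two reasons.

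First, the window $z_{d-2,d+2}$ is too narrow to decide whether a step preserves $s_Z$. If $\zeta_{d+1}(n)=1$, the step $F_d+F_{d+1}=F_{d+2}$ preserves $s_Z$ iff $\zeta_{d+3}(n)=0$. If $\zeta_{d-2}(n)=\zeta_d(n)=1$, the change depends on both ends of the maximal alternating block of $n$ containing $d$.

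Second, and more seriously, reducing ``the remaining two steps'' to cases (i) and (ii) cannot by itself give a contradiction. A step of type (i) followed by one of type (ii) can preserve $s_Z$ twice; this is exactly your own family $F_{d-1}\to F_{d+1}\to F_{d+2}$. When $\zeta_{d-2}(a)=\zeta_d(a)=1$, the term $a+F_d$ always acquires $F_{d-1}$, from $2F_{d-2}=F_{d-4}+F_{d-1}$. So nothing in your outline rules out $s_Z(a+F_d)=s_Z(a+2F_d)=s_Z(a+3F_d)$. (Cases (i) and (ii) also overlap in the pattern $1,0,1$ at positions $d-1,d,d+1$. There the upward chain described in (i) is wrong, because $F_{d-1}+F_d$ collides with $F_{d+1}$.)

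\textbf{What the paper does.} The missing idea is to track where the carry of the first step lands.
\begin{enumerate}[$(i)$]
\item The paper excludes the patterns $0,0,0$, then $0,0,1$, then $1,0,1$ in positions $d-1,d,d+1$ at the relevant terms, since each forces a change of $s_Z$ within two steps. It also excludes $1,0,0$ at $a$. Hence $\zeta_d(a)=1$.
\item The subcases $\zeta_{d-3}(a)=\zeta_{d-2}(a)=0$ and $\zeta_{d-3}(a)=1$ lead back to excluded patterns at $a+F_d$.
\item In the remaining subcase $\zeta_{d-2}(a)=\zeta_d(a)=1$, it deduces $\zeta_{d+2}(a)=1$. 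It takes the maximal alternating block $j'+1,j'+3,\dots,j-1$ of $a$ containing $d$. It writes $a+F_d$ with this block replaced by $F_{j'-1}+F_{j'+2}+\dots+F_{d-1}+F_j$, merging $F_{j'-2}+F_{j'-1}$ if $\zeta_{j'-2}(a)=1$.
\item Counting summands, equality forces $\zeta_{j'-2}(a)=0$ and $j=d+3$. Hence $F_{d+3}$ is a summand of $a+F_d$ and of $a+2F_d$, which contains $F_{d+1}+F_{d+3}$. The third addition collapses $F_d+F_{d+1}+F_{d+3}$ into $F_{d+4}$ or higher, losing a summand.
\end{enumerate}

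The mechanism is thus a collision with a summand deposited at $d+2$ or $d+3$, not the window emptying out, as your heuristic ``each equal-$s_Z$ step pushes the active summand upward past $d+1$'' suggests. Compare $a=F_{d-3}+F_d$ with $d\geq5$: here $a+2F_d=F_{d-1}+F_{d+2}$ still has $\zeta_{d-1}=1$, and the third step fails only through the collision with $F_{d+2}$.
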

\begin{proof}
For every $a\in\N$ such that $z_{d-2,d+3}(a)=F_{d-1}$, since $z_{d-2,d+3}(a+F_d)=F_{d+1}$ and $z_{d-2,d+3}(a+2F_d)=F_{d+2}$, we have $s_Z(a)=s_Z(a+F_d)=s_Z(a+2F_d)$.

Now, suppose that $s_Z(a)=s_Z(a+F_d)=s_Z(a+2F_d)=s_Z(a+3F_d)$ for some $a\in\N$. Note that $z_{d-1,d+1}(a+kF_d)\neq0$ for all $k\in\{0,1,2\}$; otherwise, $s_Z(a+(k+1)F_d)=s_Z(a+kF_d)+1$. Consequently, we also have $z_{d-1,d+1}(a+kF_d)\neq F_{d+1}$ for all $k\in\{0,1\}$; otherwise, $z_{d-1,d+1}(a+(k+1)F_d)=0$. Furthermore, $z_{d-1,d+1}(a)\neq F_{d-1}$; otherwise, $z_{d-1,d+1}(a+F_d)\in\{0,F_{d+1}\}$. If $z_{d-1,d+1}(a+kF_d)=F_{d-1}+F_{d+1}$ for some $k\in\{0,1\}$, then the Zeckendorf decomposition of $a+(k+1)F_d$ is
\[\sum_{i=2}^{d-3}\zeta_i(a+kF_d)F_i+F_{d-1}+F_j+\sum_{i=j+2}^\infty\zeta_i(a+kF_d)F_i,\]
where $j\geq d+2$ is the smallest integer such that $\zeta_j(a+kF_d)=\zeta_{j+1}(a+kF_d)=0$. Since $s_Z(a+kF_d)=s_Z(a+(k+1)F_d)$, we have $j=d+2$. As a result, the Zeckendorf decomposition of $a+(k+2)F_d$ is
\[\sum_{i=2}^{d-3}\zeta_i(a+kF_d)F_i+F_{j'}+\sum_{i=j'+2}^\infty\zeta_i(a+kF_d)F_i,\]
where $j'\geq d+3$ is the smallest integer such that $\zeta_{j'}(a+kF_d)=\zeta_{j'+1}(a+kF_d)=0$. This leads to a contradiction since $s_Z(a+kF_d)>s_Z(a+(k+2)F_d)$. Thus $z_{d-1,d+1}(a+kF_d)\neq F_{d-1}+F_{d+1}$ for all $k\in\{0,1\}$. Therefore, $z_{d-1,d+1}(a)=F_d$.

It is useful to observe that $2F_i=F_{i-2}+F_{i+1}$ for all integers $i\geq2$. Next, we have $z_{d-3,d+1}(a)\neq F_d$; otherwise, $z_{d-1,d+1}(a+F_d)\in\{0,F_{d+1}\}$. If $z_{d-3,d+1}(a)=F_{d-3}+F_d$, then the Zeckendorf decomposition of $a+F_d$ is
\[\sum_{i=2}^{d-5}\zeta_i(a)F_i+F_{d-1}+F_j+\sum_{i=j+2}^\infty\zeta_i(a)F_i,\]
where $j\geq d+1$ is the smallest integer such that $\zeta_j(a)=\zeta_{j+1}(a)=0$. Since $s_Z(a)=s_Z(a+F_d)$, we have $j=d+1$. This leads to a contradiction since $z_{d-1,d+1}(a+F_d)=F_{d-1}+F_{d+1}$. Hence, it remains to consider the case when $z_{d-3,d+1}(a)=F_{d-2}+F_d$.

Since $F_{d-2}+2F_d=2F_{d-2}+F_{d+1}=F_{d-4}+F_{d-1}+F_{d+1}$, we have $z_{d-1,d+1}(a+F_d)\in\{F_{d-1},F_{d-1}+F_{d+1}\}$. We have shown that $z_{d-1,d+1}(a+F_d)\neq F_{d-1}+F_{d+1}$, so $z_{d-1,d+1}(a+F_d)=F_{d-1}$, which implies that $\zeta_{d+2}(a)=1$. Let $j\geq d+3$ be the smallest integer such that $\zeta_j(a)=\zeta_{j+1}(a)=0$, and let $j'\leq d-3$ be the largest integer such that $\zeta_{j'}(a)=\zeta_{j'-1}(a)=0$. If $\zeta_{j'-2}(a)=1$, then the Zeckendorf decomposition of $a$ is
\[\sum_{i=2}^{j'-4}\zeta_i(a)F_i+F_{j'-2}+\sum_{i=0}^{\frac{d-2-(j'+1)}{2}}F_{j'+1+2i}+F_d+\sum_{i=0}^{\frac{j-1-(d+2)}{2}}F_{d+2+2i}+\sum_{i=j+2}^\infty\zeta_i(a)F_i\]
and that of $a+F_d$ is
\[\sum_{i=2}^{j'-4}\zeta_i(a)F_i+F_{j'}+\sum_{i=0}^{\frac{d-2-(j'+1)}{2}}F_{j'+2+2i}+F_j+\sum_{i=j+2}^\infty\zeta_i(a)F_i.\]
This leads to a contradiction since $s_Z(a)>s_Z(a+F_d)$. Thus $\zeta_{j'-2}(a)=0$, the Zeckendorf decomposition of $a$ is
\[\sum_{i=2}^{j'-3}\zeta_i(a)F_i+\sum_{i=0}^{\frac{d-2-(j'+1)}{2}}F_{j'+1+2i}+F_d+\sum_{i=0}^{\frac{j-1-(d+2)}{2}}F_{d+2+2i}+\sum_{i=j+2}^\infty\zeta_i(a)F_i,\]
and that of $a+F_d$ is
\[\sum_{i=2}^{j'-3}\zeta_i(a)F_i+F_{j'-1}+\sum_{i=0}^{\frac{d-2-(j'+1)}{2}}F_{j'+2+2i}+F_j+\sum_{i=j+2}^\infty\zeta_i(a)F_i.\]
Since $s_Z(a)=s_Z(a+F_d)$, we have $j=d+3$. Then the Zeckendorf decomposition of $a+2F_d$ is
\[\sum_{i=2}^{j'-3}\zeta_i(a)F_i+F_{j'-1}+\sum_{i=0}^{\frac{d-4-(j'+1)}{2}}F_{j'+2+2i}+F_{d+1}+F_{d+3}+\sum_{i=d+5}^\infty\zeta_i(a)F_i\]
and that of $a+3F_d$ is
\[\sum_{i=2}^{j'-3}\zeta_i(a)F_i+F_{j'-1}+\sum_{i=0}^{\frac{d-4-(j'+1)}{2}}F_{j'+2+2i}+F_{j''}+\sum_{i=j''+2}^\infty\zeta_i(a)F_i,\]
where $j''\geq d+4$ is the smallest integer such that $\zeta_{j'}(a)=\zeta_{j'+1}(a)=0$. This also leads to a contradiction since $s_Z(a)>s_Z(a+3F_d)$.
\end{proof}

\begin{theorem}
For every integer $d\geq3$, there exists infinitely many $a\in\N$ such that $a$, $a+F_d$, and $a+2F_d$ are Zeckendorf-Niven and $s_Z(a)=s_Z(a+F_d)=s_Z(a+2F_d)$.
\end{theorem}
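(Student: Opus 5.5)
From the first part of the proof of Theorem~\ref{thm:samesZ}, every $a$ with $z_{d-2,d+3}(a)=F_{d-1}$ satisfies $s_Z(a)=s_Z(a+F_d)=s_Z(a+2F_d)$. So I only need to build infinitely many such $a$ whose common value $s=s_Z(a)$ divides $a$, $a+F_d$ and $a+2F_d$. This is equivalent to $s\mid a$ and $s\mid F_d$. The plan is therefore:
\begin{enumerate}[$(1)$]
\item fix a divisor $s$ of $F_d$;
\item build $a$ with $z_{d-2,d+3}(a)=F_{d-1}$ and $s_Z(a)=s$;
\item force $s\mid a$ using Pisano periodicity, as in the proofs of Theorems~\ref{thm:infinZN} and the $2$-AP result.
\end{enumerate}

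The divisor cannot be arbitrary. The decomposition must contain $F_{d-1}$, and a $1$ or $2$ summand count gives too little freedom, so I need $s\ge 2$. I take $s=F_d$ itself; this needs $F_d\geq2$, i.e.\ $d\geq3$, which matches the hypothesis. Then I look for
\[a=F_{d-1}+\sum_{i=1}^{F_d-1}F_{t_i},\qquad d+5\leq t_1,\quad t_{i+1}-t_i\geq2.\]
This expression is automatically a Zeckendorf decomposition. It also automatically gives $z_{d-2,d+3}(a)=F_{d-1}$ and $s_Z(a)=F_d$. Since $F_d\mid F_d$ and $F_d\mid 2F_d$, the only remaining condition is
\[F_{d-1}+\sum_{i=1}^{F_d-1}F_{t_i}\equiv 0\pmod{F_d}.\]

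To arrange the congruence I work modulo $F_d$ with period $\pi=\pi(F_d)$. By Proposition~\ref{prop:Pisano}, $\pi$ is $2d$ or $4d$ according to the parity of $d$. The indices only matter modulo $\pi$, and $t_i=c_i+\pi k_i$ with increasing $k_i$ always gives enough spacing. So I just need residues $c_1,\dots,c_{F_d-1}$ with
\[\sum_i F_{c_i}\equiv -F_{d-1}\pmod{F_d}.\]
The simplest choice is to use $F_{c_i}\equiv 0$ (take $c_i\equiv 0$) for all but a few summands, and make a small correction with the rest. We have $F_{d+1}\equiv F_{d-1}\pmod{F_d}$. Also $F_{d-1}^2\equiv(-1)^d\pmod{F_d}$ by Cassini's identity, which gives $F_{2d-1}=F_d^2+F_{d-1}^2\equiv(-1)^d$. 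Hence $-F_{d-1}$ is itself a Fibonacci residue mod $F_d$: for example $F_{2d+1}\equiv F_{d+1}\cdot(\text{sign})$, a standard shift via $F_{m+d}\equiv F_{d+1}F_m\equiv F_{d-1}F_m$. Concretely, $F_{m+d}\equiv F_{d-1}F_m$ gives $F_{3d-1}\equiv F_{d-1}F_{2d-1}\equiv(-1)^dF_{d-1}$. For $d$ odd, one summand with $c_1\equiv 3d-1$ gives the needed residue $-F_{d-1}$. For $d$ even, I instead use $F_{-m}=(-1)^{m+1}F_m$ periodically, e.g.\ the residue of index $-(d-1)+\pi$ gives $(-1)^dF_{d-1}$ with the opposite sign pattern. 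I would check this sign bookkeeping directly. The other $F_d-2$ summands take residue $0$.

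Infinitely many $a$ come from shifting all $t_i$ by multiples of $\pi$, which changes $a$ but preserves all the conditions. The main obstacle is the sign/residue bookkeeping in producing $-F_{d-1}$ mod $F_d$ for both parities of $d$. If the single-summand trick fails for one parity, my fallback is to use several summands with residues $F_{c}\equiv\pm1$, namely $c\equiv1,2$ and their negatives mod $\pi$. I would also split between $c\equiv1$ and $c\equiv d-1$ appropriately, since I have $F_d-1\geq1$ summands available, possibly switching to $s=2$ or another small divisor when $F_d$ is small.
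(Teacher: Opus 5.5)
Your framework is sound and is essentially the paper's. You pick a divisor $m\ge 2$ of $F_d$ (you take $m=F_d$) and write $a$ as $F_{d-1}$ plus $m-1$ far-out, well-spaced summands. This keeps $s_Z$ constant on $a,a+F_d,a+2F_d$ and reduces everything to $m\mid a$, which you control through the Pisano period. Your odd case is also correct: $F_{3d-1}\equiv F_{d-1}F_{2d-1}\equiv(-1)^dF_{d-1}=-F_{d-1}\pmod{F_d}$.

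The gap is the even case, which you leave open, and the candidate you offer fails. For even $d$, $F_{-(d-1)}=(-1)^{d}F_{d-1}=+F_{d-1}$, and likewise $F_{3d-1}\equiv+F_{d-1}$. With the other $F_d-2$ summands at residue $0$ you would get $a\equiv 2F_{d-1}\pmod{F_d}$. This is nonzero because $\gcd(F_{d-1},F_d)=1$ and $F_d\ge 3$. The ``fallback'' is not carried out, so as written your argument proves the theorem only for odd $d$.

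The missing observation is elementary and does not depend on parity: $F_{d-2}=F_d-F_{d-1}\equiv -F_{d-1}\pmod{F_d}$. So one summand at a large index $\equiv d-2$ modulo the period, plus $F_d-2$ summands at indices $\equiv 0$, closes the proof for every $d\ge 3$, with no Cassini needed. Your fallback can also be made to work. For even $d$, put $F_{d-1}$ summands at indices $\equiv 2d-2\pmod{2d}$, each $\equiv F_{-2}=-1$, and the remaining $F_{d-2}-1$ summands at indices $\equiv 0$.

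The paper avoids needing a negative residue at all. It places all $m-1$ extra summands at indices $d+2+\pi(m)(i+j)$, each congruent to $F_{d+2}=F_{d-1}+2F_d\equiv F_{d-1}\pmod m$. Hence $a\equiv mF_{d-1}\equiv 0\pmod m$: the number of summands itself cancels the residue, and this works for any divisor $m\ge 2$ of $F_d$.
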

\begin{proof}
Let $m\geq2$ be a factor of $F_d$. Then for any nonnegative integer $j$, let
\[a=F_{d-1}+\sum_{i=1}^{m-1}F_{d+2+\pi(m)(i+j)}.\]
Since $\pi(m)\geq2$, this is the Zeckendorf decomposition of $a$. Moreover, the Zeckendorf decompositions of $a+F_d$ and $a+2F_d$ are
\[a+F_d=F_{d+1}+\sum_{i=1}^{m-1}F_{d+2+\pi(m)(i+j)}\]
and
\[a+2F_d=F_{d+2}+\sum_{i=1}^{m-1}F_{d+2+\pi(m)(i+j)},\]
respectively. Hence, $s_Z(a)=s_Z(a+F_d)=s_Z(a+2F_d)=m$. Since $m$ divides $F_d$, in order for $a$, $a+F_d$, and $a+2F_d$ to be Zeckendorf-Niven, it suffices to show that $m$ divides $a$. This follows since
\[a\equiv F_{d-1}+(m-1)F_{d+2}=F_{d-1}+(m-1)(F_{d-1}+2F_d)=mF_{d-1}+2(m-1)F_d\equiv0\pmod{m},\]
where the first congruence is due to the definition of the Pisano period.
\end{proof}

\section{Acknowledgement}\label{sec:acknowledgement}

These results are based on work supported by the National Science Foundation under grant numbered DMS-2341670. Also, special thanks to the organizers of the Polymath Jr for making this project possible.

\end{document}